\documentclass[11pt,reqno]{amsart}
\usepackage{tikz,a4wide}
\usetikzlibrary{arrows.meta,math,calc}
\usepackage{tikz-3dplot}
\usepackage{amsmath,amssymb,amsthm,mathtools}
\usepackage{enumitem}
\usepackage[pagebackref,citecolor=black,linkcolor=blue,colorlinks=true]{hyperref}

\newtheorem{theorem}{Theorem}[section]
\newtheorem{lemma}[theorem]{Lemma}
\newtheorem{proposition}[theorem]{Proposition}
\newtheorem{corollary}[theorem]{Corollary}

\theoremstyle{definition}
\newtheorem{definition}[theorem]{Definition}

\numberwithin{equation}{section}

\newcommand{\R}{\mathbb{R}}
\newcommand{\Sf}{\mathbb{S}^2}
\newcommand{\Ha}{\mathcal{H}}
\newcommand{\N}{\mathcal{N}}
\newcommand{\dist}{\mathrm{dist}}
\newcommand{\Lip}{\mathrm{Lip}}
\newcommand{\Cone}{\mathrm{Cone}}
\newcommand{\M}{\mathcal{M}}
\newcommand{\eps}{\varepsilon}
\newcommand{\ssubset}{\subset\joinrel\subset}

\title{Boundary Plateau laws II}
\author[C. De Lellis]{Camillo De Lellis}
\address{School of Mathematics, Institute for Advanced Study, 1 Einstein Dr., Princeton NJ 08540, USA}
\address{Gran Sasso Science Institute, viale Francesco Crispi, 7, 67100 L'Aquila, Italy}
\email{camillo.delellis@ias.edu}
\author[F. Glaudo]{Federico Glaudo}
\email{federico.glaudo@gmail.com}

\begin{document}

\begin{abstract}
Given a straight line $\ell \subset \mathbb R^3$, we classify $2$-dimensional cones in $\mathbb R^3$ which contain $\ell$ and are Almgren minimizing with respect to $\ell$. The main idea of this paper is due to an LLM, and the paper was written by an LLM, under the guidance of the authors.
\end{abstract}

\maketitle

\section{Introduction}

This paper is a sequel to \cite{DLG}, to which we refer the reader for a more thorough introduction. Here we only give the mathematics and omit the physical and historical context. Moreover, our bibliography is far from complete, and we refer the reader to the bibliography of \cite{DLG}. 

Parts of this introduction are identical to the corresponding parts of \cite{DLG}. Our main result, Theorem \ref{t:main}, is stronger than the main result of \cite{DLG}: it shows that the cones over the $1$-skeleton of a cube and the ``thin fish'', which appear in the list of \cite{DLG}, are not minimizing. We refer to \cite{DLG} for a comparison of the proofs of the two results and in particular to \cite[Section 1.3]{DLG} for how to interpret the theorems in the context of understanding the shape of soap films attached to a wire at a point on the wire.

Incidentally, a shortened version of the arguments of this note can also be used in the interior, covering an important part of \cite{Taylor}.

\subsection{Main theorem}

\begin{definition}[Almgren minimizing sets]\label{d:Almgren-minimizing}
Let $k<N$ be two given positive integers, $U\subset \mathbb R^N$ an open subset, and $\Gamma$, $E$ two relatively closed subsets of $U$. $E$ is a $k$-dimensional Almgren minimizing set with respect to $\Gamma$ if $E$ has locally finite $\mathcal{H}^k$ measure in $U$ and the inequality
\begin{equation}\label{e:Almgren-minimizing}
\Ha^k (E\cap V) \leq \Ha^k (\Phi (E) \cap V)
\end{equation}
holds for every open set $V\ssubset U$ and every Lipschitz map $\Phi$ with
\[
\overline{W\cup \Phi (W)} \ssubset V\setminus \Gamma\, , \qquad \mbox{where $W= \{x: x\neq \Phi (x)\}$.}
\]
\end{definition}

The Almgren minimality property is invariant under addition of an $\mathcal{H}^k$-null set. As in \cite{DLG} we therefore impose on $E$ the normalization
\begin{equation}\label{e:normalization}
\mathcal{H}^k (E\cap B_r (x)) > 0 \qquad \forall x\in E \quad\mbox{and} \quad \forall r >0 \, .
\end{equation}
By a theorem of Almgren such sets are (countably) rectifiable away from $\Gamma$, cf. \cite[Theorem II.3, Conclusion 9]{Almgren-1976}; in our case $\Gamma$ is a line. In the rest of the paper ``minimizing'' refers to a rectifiable set as in Definition \ref{d:Almgren-minimizing} which satisfies \eqref{e:normalization}.

The focus of this paper is the classification of all $2$-dimensional cones which are minimizing with respect to a line $\ell$ contained in them.

\begin{theorem}\label{t:main}
Let $\ell\subset\mathbb R^3$ be a line through the origin and let $E\supset\ell$ be a closed cone which is minimizing with respect to $\ell$. Then, up to a rotation, $E$ is one of the following:
\begin{itemize}
\item[(a)] a half-plane with boundary $\ell$;
\item[(b)] two half-planes with boundary $\ell$, forming a convex angle $\geq\frac{2\pi}{3}$;
\item[(c)] three half-planes with boundary $\ell$, forming angles of $\frac{2\pi}{3}$;
\item[(d)] the cone over the $1$-skeleton of a regular tetrahedron, with $\ell$ one of the three lines it contains, i.e. the line through the midpoints of two opposite edges.
\end{itemize}
Conversely, each of the cones (a)--(d) is minimizing with respect to $\ell$.
\end{theorem}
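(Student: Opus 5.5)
We argue in two halves: first we classify the possible links, then we rule out or confirm each candidate by competitor constructions and calibrations. Write $E=\Cone(\Gamma_E)$ with $\Gamma_E=E\cap\Sf$, and let $\pm p=\ell\cap\Sf$. Away from $\ell$, $E$ is an Almgren minimizing cone in the interior. By Taylor's classification (or the shortened interior argument alluded to in the introduction), each point of $\Gamma_E\setminus\{\pm p\}$ therefore has a neighborhood in which $\Gamma_E$ is either a single great-circle arc or a $Y$-junction of three arcs meeting at $120^\circ$. Consequently $\Gamma_E$ is a finite geodesic net on $\Sf$: its arcs are great-circle arcs, its interior vertices are triple junctions with equal angles, and the points $\pm p$ are additional vertices of arbitrary valence. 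Finiteness comes from a monotonicity/density bound, since the density of $E$ at $0$ is at most that of a competitor, e.g. the cone over $\partial$ of a suitable region. The angles at $\pm p$ are constrained too. If two consecutive arcs at $p$ formed an angle $<2\pi/3$, we could slide a small Steiner-type deformation off $\ell$ near $p$; this is allowed because $\Phi$ need only fix a neighborhood of $\ell$ away from $p$ along the cone, and one obtains it by cone-cutting. The deformation strictly reduces area, so all angles at $\pm p$ are $\geq 2\pi/3$. Hence the valence at $p$ is at most $3$, and $\Gamma_E$ is symmetric in valence at $\pm p$ because $\ell$ is a full line through the origin.

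Next we enumerate the nets by a combinatorial/Gauss–Bonnet argument, as in \cite{DLG}. Each face $F$ of $\Sf\setminus\Gamma_E$ is a geodesic polygon. Its area is $2\pi-\sum(\pi-\theta_i)$, where $\theta_i=2\pi/3$ at interior vertices and $\theta_i\ge 2\pi/3$ at $\pm p$. Summing over faces and using Euler's formula leaves finitely many candidates. These are: the nets with no interior vertices, which give (a), (b), (c), as well as the great circle through $\pm p$ (a plane, which is case (b) with angle $\pi$); and the nets with triple junctions, namely the tetrahedral net, the cube net, the "fish" nets, the prism-type nets and the remaining Taylor-type nets, each positioned so that $\pm p$ lie on edges or at vertices. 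Many positions are excluded at once by the vertex-angle constraint at $\pm p$. Note that $\pm p$ may lie in the interior of an arc, but then the arc through $p$ is a geodesic passing through $p$ and its antipode, which forces strong symmetry.

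The key, and hardest, step is excluding the survivors other than (a)–(d), notably the cube cone and the "thin fish" retained in \cite{DLG}. For each we will construct an explicit competitor of smaller area in $B_1$ that coincides with $E$ near $\partial B_1$ and near $\ell$. We first try the interior Taylor-style deformation: push a face of the link (a spherical region adjacent to $\ell$ but not containing $\pm p$ in its closure interior) inward by a one-parameter family $\Phi_t$. The first variation of area vanishes by stationarity. The sign of the second variation is then computed via the spectrum of the Jacobi operator on the cone over that face, and a negative direction exists when the face is "large", in the sense that its first Dirichlet eigenvalue on $\Sf$ is $<2$ (compared with the $1/4$ Hardy threshold on cones). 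For the cube and the thin fish, the faces meeting $\ell$ have area large enough for this instability to hold while still admitting deformations supported away from $\ell$. This eigenvalue estimate is where I expect the real difficulty. If it fails, the fallback is an explicit Lipschitz retraction that replaces a neighborhood of one vertex-face by a shrunken copy, as in Taylor's non-minimality proofs.

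Finally, for the converse we use calibration arguments. Cases (a)–(c) follow by projection onto the half-planes and paired calibrations with constant vector fields, which is admissible because the boundary $\ell$ contributes nothing. The plane with angle $\ge 2\pi/3$ in case (b) is handled by comparing with the triple-junction, using that sliding across $\ell$ is forbidden. Case (d) follows from Taylor's paired calibration of the tetrahedral cone, which is minimizing even without the constraint and is therefore a fortiori minimizing with respect to $\ell$.
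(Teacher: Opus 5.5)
Your treatment of the first and last parts matches the paper: the link is a stationary geodesic net with angles $\geq\frac{2\pi}{3}$ at the pins, and the converse follows from \cite{DLG} and Taylor. But the decisive step, showing that the cube cone and the thin fish are not minimizing, is not carried out, and the route you sketch would not work.

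The criterion you quote ($\lambda_1<2$ against a Hardy constant $\frac14$) comes from Simons-type stability of smooth hypercones in higher dimension. Here the cone consists of flat sectors meeting at $120^\circ$ along straight lines, the Hardy constant for $2$-dimensional cones is $0$, and on each flat sheet the Jacobi operator is just the Laplacian. The competitors that beat these cones are finite, topology-changing deformations: collapsing a face of the link, or splitting a line along which $k\geq4$ sheets meet. No Jacobi-operator computation detects them.

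More importantly, you never confront the constraint at $\ell$. The equiangular quadrilaterals and pentagons that must be destroyed may have a pin as a vertex; if $\ell$ passes through two antipodal vertices of the cube net, each of its six faces does. The cone over such a face's boundary then contains a half-line of $\ell$. You must explain why an area-decreasing deformation can be made the identity near $\ell$, with its image staying away from $\ell$, at negligible cost. Your phrase ``while still admitting deformations supported away from $\ell$'' asserts exactly this, and your Taylor-style fallback meets the same obstruction.

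The paper supplies the two missing ingredients:
\begin{itemize}
\item Proposition \ref{p:halfspace}. A Lipschitz $\Psi$ that fixes $\ell$ pointwise, fixes $E\cap H_+$ and satisfies $\Psi^{-1}(H_+)=E\cap H_+$, for an open half-space $H_+$ with $\ell\subset\partial H_+$, can be replaced at arbitrarily small cost by an admissible map. One translates by $-\rho v$ and cuts off near $\ell$ and $\partial B_1$. Such an $H_+$ exists because $\Cone(F)$ is convex and $\ell$ misses its interior.
\item Proposition \ref{p:convex-face}. The cone over $\partial F$ is pushed along the inward normals onto the cone over the medial axis. This gains $\eta(F)>0$ unless some point is equidistant from all sides. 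In that case one gets $k\geq4$ half-planes along a line, which can then be split.
\end{itemize}

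The enumeration step also has a gap. Euler's formula and Gauss--Bonnet bound the combinatorics, but they do not decide which nets are geometrically realizable when the angles at the pins are free. Producing your list is precisely the long, partly computer-assisted analysis of \cite{DLG}. Citing the main theorem of \cite{DLG} as a black box is legitimate and leaves only the cube and the thin fish, but then the first gap is the whole problem.

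The paper bypasses that analysis. Once every equiangular face is known to be a biangle or a triangle (Proposition \ref{t:faces}), Proposition \ref{p:classification} follows by a short induction. It uses the medial axis reduction of pin-free faces, a counting argument when every face touches a pin, and the balancing identity $S(P)=S(-P)$ of Lemma \ref{l:balancing}.

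Finally, your claim that the valences at $p$ and $-p$ coincide ``because $\ell$ is a full line'' is unfounded: nothing relates the link near $p$ to the link near $-p$ a priori. Configurations with pins of degrees $2$ and $1$ must be excluded by an actual argument. In the paper, this is where the balancing identity produces an equiangular quadrilateral with a pin as a vertex, so Proposition \ref{t:faces} is needed once more.
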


\begin{figure}
\tdplotsetmaincoords{80}{15}
\begin{tikzpicture}[tdplot_main_coords]
\draw[thick] (-9,0,-1) -- (-8,0,-1) -- (-8,0,1) -- (-9,0,1);
\draw[thick, color=red] (-9,0,-1.2) -- (-9,0,1.2);
\draw[thick] (-6,0,-1) -- (-5,0,-1) -- (-5,0,1) -- (-6,0,1);
\draw[thick, color=red] (-6,0,-1.2) -- (-6,0,1.2);
\draw[thick] (-6,0,-1) -- ({-6-sqrt(2)*cos(50)}, {-sqrt(2)*sin(50)},-1)
-- ({-6-sqrt(2)*cos(50)}, {-sqrt(2)*sin(50)},1) -- (-6,0,1);
\draw[thick, color=red] (-3,1,-1.2) -- (-3,1,1.2);
\draw[thick] (-4,1,-1) -- (-3,1,-1);
\draw[thick] (-3,1,1) -- (-4,1,1) -- (-4,1,-1);
\draw[thick] (-3,1,1) -- (-2.5,{sqrt(3)/2+1},1) -- (-2.5,{sqrt(3)/2+1},-1);
\draw[thick] (-2.5,{sqrt(3)/2+1},-1) -- (-0.35*2.5 -0.65*3,{0.35*(sqrt(3)/2 + 1) + 0.65},-1);
\draw[thick, dashed] (-3,1,-1) -- (-0.35*2.5 -0.65*3,{0.35*(sqrt(3)/2 + 1) + 0.65},-1);
\draw[thick] (-3,1,1) -- (-2.5,{-sqrt(3)/2+1},1) -- (-2.5,{-sqrt(3)/2+1},-1) -- (-3,1,-1);
\end{tikzpicture}
\caption{The local boundary geometries (a), (b), and (c). The line $\ell$ is in red.}\label{f:PlateauB1}
\end{figure}

\begin{figure}
\tdplotsetmaincoords{80}{20}
\begin{tikzpicture}[tdplot_main_coords]
\draw[thick] (0,0, {2*sqrt(2)}) -- ({-sqrt(3)},-1,0) -- ({sqrt(3)}, -1, 0) -- (0,0,{2*sqrt(2)});
\draw[thick, dashed] ({sqrt(3)},-1,0) -- (0,2,0) -- (0,0,{2*sqrt(2)});
\draw[thick, dashed] ({-sqrt(3)},-1,0) -- (0,2,0);
\draw[dashdotted] ({-sqrt(3)},-1,0) -- (0,0,{sqrt(2)/2});
\draw[dashdotted] ({sqrt(3)},-1,0) -- (0,0,{sqrt(2)/2});
\draw[dashdotted] (0,2,0) -- (0,0,{sqrt(2)/2});
\draw[dashdotted] (0,0,{2*sqrt(2)}) -- (0,0,{sqrt(2)/2});
\draw[thick, red] (0,-1.2,{-0.1*sqrt(2)}) -- (0,1.2,{1.1*sqrt(2)});
\end{tikzpicture}
\caption{The local boundary geometry (d). The line $\ell$ is going through the midpoints of two opposite edges of the $1$-skeleton of the tetrahedron.}\label{f:PlateauB2}
\end{figure}

\subsection{The main idea}
It is standard to show that the link $E\cap\partial B_1$ of a minimizing cone $E$ as in Theorem \ref{t:main} is a stationary geodesic net on $\Sf$, i.e. a finite union of geodesic arcs; in particular, at every vertex which does not lie on $\ell$ exactly three arcs meet, forming angles of $\frac{2\pi}{3}$ (see Proposition \ref{p:stationary} below).

The key observation of this paper is the following: if $E$ is minimizing, then it must remain minimizing if we replace a convex face of its link whose angles are all equal to $\frac{2\pi}{3}$ with its medial axis. Informally, the medial axis of a polygon $F$ is the set of points of $F$ which have at least two closest points on $\partial F$; for a convex polygon it is a tree joining the vertices of $F$ (see Figure \ref{f:medial}).

More precisely, let $F$ be such a face. We move each planar sector of the cone over $\partial F$ along its inward normal until it reaches the cone over the medial axis of $F$. Two sectors arriving at the same piece of the medial cone are mapped onto the same set, and the angle condition guarantees that this does not increase the area. With a logarithmic cutoff, the deformation can be localized at an arbitrarily small cost in area, and the resulting set coincides with the cone over the medial axis of $F$ in a neighborhood of the origin. By Proposition \ref{p:halfspace} below, this gives an admissible competitor at an arbitrarily small additional cost. If $F$ is a quadrilateral or a pentagon, the new set either has strictly less area, or close to some point it consists of $k\geq 4$ half-planes bounded by a line, which is not minimizing. Hence such faces do not exist. This simplifies greatly the classification.

\subsection{Sketch of the proof}
We recall the following terminology from \cite[Definitions 4.1 and 4.2]{DLG}. A geodesic net $\N\subset\Sf$ is a finite union of geodesic arcs (the edges), which can intersect only at their endpoints (the vertices). $\N$ is a {\em stationary net with two antipodal pins} if two antipodal vertices $P$ and $-P$ are marked as {\em pins} and
\begin{itemize}
\item every vertex which is not a pin is the endpoint of exactly three edges;
\item every pin is the endpoint of one, two, or three edges;
\item any two edges with a common endpoint form an angle of at least $\frac{2\pi}{3}$.
\end{itemize}
In particular, at a vertex which is not a pin the three edges form angles of $\frac{2\pi}{3}$. The {\em degree} of a vertex is the number of edges ending at it. A {\em face} of $\N$ is a connected component of $\Sf\setminus \N$. If $F$ is a face and $V$ is a vertex of $\N$ lying on $\partial F$, the {\em angles of $F$ at $V$} are the openings of the sectors of $F$ at $V$ bounded by two consecutive edges ending at $V$ (if $V$ is a pin of degree $1$, the angle is $2\pi$). We call a face {\em equiangular} if all its angles, including those at the pins, are equal to $\frac{2\pi}{3}$.

The starting point is the following proposition from \cite{DLG}.

\begin{proposition}[{\cite[Proposition 4.3]{DLG}}]\label{p:stationary}
Assume $E\subset \mathbb R^3$ is a cone containing a line $\ell$ which is minimizing with respect to it. The link of $E$, namely $E\cap \partial B_1$, is a stationary net with two antipodal pins, which are the two points forming the set $\ell \cap \partial B_1$.
\end{proposition}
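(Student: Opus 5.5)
The plan is to work entirely on the link. Write $K:=E\cap\partial B_1$; since $E$ is a closed cone, $E=\{t x: t\ge 0, x\in K\}$. I would use blow-ups at points of $K$ together with the monotonicity formula. The statement has three parts: that $K$ is a finite union of geodesic arcs, the local structure at vertices away from the pins, and the local structure at the pins $\pm P$, where $\ell\cap\partial B_1=\{P,-P\}$.

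\emph{Step 1: structure away from the pins.} Take $x\in K\setminus\{\pm P\}$. A small ball $B_r(x)$ misses $\ell$, so on it $E$ is Almgren minimizing with no obstacle.
\begin{itemize}
\item Tangent cones at $x$ exist. They are translation invariant along the radial direction $x$, so each has the form $C\times\R x$ with $C$ a one-dimensional minimizing cone in the plane $x^\perp$.
\item By the one-dimensional classification (Taylor, or directly by Steiner-type comparisons), $C$ is either a line or three half-lines at angles $\frac{2\pi}{3}$.
\item Taylor's regularity (or the $\eps$-regularity of Almgren and David) then shows that near $x$ the set $E$ is a $C^{1,\alpha}$ surface or a $Y$-type junction of three such sheets.
\item Because $E$ is a cone, its smooth part is a stationary cone, hence a union of planar sectors. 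So $K$ is locally a geodesic arc, or three geodesic arcs meeting at $\frac{2\pi}{3}$.
\end{itemize}

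\emph{Step 2: finiteness.} Suppose $K$ has infinitely many vertices or arcs. By compactness they would accumulate at some point of $K$. Away from $\pm P$ this contradicts the local description of Step 1.

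Near a pin, I would use the density bound instead. Monotonicity at the origin applies because $\ell$ is a line through $0$, so reflections and radial deformations are compatible with the obstacle. It gives that $\Ha^1(K)=\Ha^2(E\cap B_1)/\cdot$ is finite. Each face of a geodesic net on $\Sf$ other than a lune has perimeter bounded below. Edges ending at $\pm P$ have length at least a fixed amount unless they end at a nearby triple vertex, and one can rule that out by a local comparison. Together these leave only finitely many arcs.

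\emph{Step 3: the pins.} Blow up $E$ at $P$. Any tangent cone is a cone containing $\ell$, minimizing with respect to $\ell$, and invariant along $\ell$. It is therefore a union of $k$ half-planes bounded by $\ell$, with $k\ge1$ by the normalization \eqref{e:normalization} and the fact that $P\in E$. For $k\ge2$, a comparison that replaces two half-planes meeting at an angle $\theta<\frac{2\pi}{3}$ with a Steiner-type configuration (a short strip near $\ell$ plus two half-planes) strictly decreases area. This forces every pair of consecutive half-planes to have angle at least $\frac{2\pi}{3}$, and hence $k\le3$. Transferring back to $K$ gives degree $1$, $2$ or $3$ at the pins, with angles at least $\frac{2\pi}{3}$. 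Pins of degree $1$ have angle $2\pi$.

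The main obstacle is Step 2 near the pins, together with justifying the blow-up compactness for sets minimizing relative to $\ell$. Almgren's rectifiability and the upper-semicontinuity of density up to $\Gamma$ require care, and I expect to follow \cite{DLG}, where this is Proposition 4.3.
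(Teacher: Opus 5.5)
\textbf{Verdict: there is a genuine gap at the pins.} The paper does not prove this statement; it imports it from \cite{DLG}. So the comparison is with the statement itself.

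Your Step~1 is the standard argument and is fine: Taylor's classification at points of $E\setminus\ell$, tangent cones of the form $C\times\R x$, and planar sectors from the cone structure. Finiteness on $K$ minus neighbourhoods of the pins is also fine. But all the real content sits at the pins.

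\textbf{Step~2 near $\pm P$ does not work.} It presupposes that near $P$ the set $K$ is already a net, with finitely many edges ending at $P$ and with faces. That is exactly what has to be shown: a priori $K\setminus\{P\}$ could be an infinite network of $\frac{2\pi}{3}$-junctions accumulating at $P$, with summable edge lengths.

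Moreover, the claim that faces other than lunes have perimeter bounded below is false for faces with a pin on their boundary. By Gauss--Bonnet, a pentagon with angle $\alpha$ at the pin and $\frac{2\pi}{3}$ at its other four vertices has area $\alpha-\frac{\pi}{3}$. Such pentagons exist at every small scale: take a small triangle with angle $\frac\pi3+\delta$ at the pin and $\frac\pi3+\epsilon$ at the other two vertices, $0<\epsilon\ll\delta$, and cut off at those two vertices the triangles with angles $\frac\pi3+\epsilon,\frac\pi3,\frac\pi3$.

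The ``local comparison'' excluding short edges at $P$ is never specified. Incidentally, radial deformations move $\ell$ and are not admissible, but you need no monotonicity: $\Ha^1(K)<\infty$ follows from $\Ha^2(E\cap B_2)<\infty$ and the cone structure.

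\textbf{Step~3 only controls subsequential blow-ups at $P$.} ``Transferring back to $K$'' is precisely the missing step. Three things are needed.

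\emph{(i) Minimality of the limits, with convergence in measure.} This is easier than you fear. Minimality with respect to $\ell$ in Definition~\ref{d:Almgren-minimizing} is just Almgren minimality in $\R^3\setminus\ell$, so the interior limit theorem for minimal sets (David) applies.

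\emph{(ii) $k\ge1$ for every blow-up.} This does not follow from \eqref{e:normalization}, which is not stable under Hausdorff limits. One can argue by a maximum principle. A blow-up with $k=0$ forces $K$ to miss some annulus $\{r\le\dist(\cdot,P)\le2r\}$. A point of $K\cap\{\dist(\cdot,P)\le r\}$ other than $P$ and farthest from $P$ can be neither an interior point of a geodesic arc nor a $\frac{2\pi}{3}$-junction. Hence $K\cap\{\dist(\cdot,P)<2r\}=\{P\}$, which contradicts \eqref{e:normalization}.

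\emph{(iii) A uniform statement at every small scale.} A compactness argument shows that for all small $r$, $E$ is close at scale $r$, in Hausdorff distance and in measure, to $k$ half-planes bounded by $\ell$ with angles $\ge\frac{2\pi}{3}$. Then apply interior $\varepsilon$-regularity in balls of radius comparable to $r$, centred on these half-planes at distance $\sim r$ from $\ell$. This shows that $K$ meets each annulus in exactly $k$ geodesic arcs without junctions. Chaining the annuli, $k$ is constant and the arcs are radial arcs ending at $P$.

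This single argument gives finiteness and the degree and angle conditions at the pins at once. Without (ii) and (iii), your outline does not prove the statement.
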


The map $\Phi$ which produces a competitor in Definition \ref{d:Almgren-minimizing} is constrained to be the identity in a neighborhood of $\ell$. The next proposition weakens this constraint: it allows maps which touch $\ell$, as long as they do not move anything in a half-space bounded by a plane containing $\ell$. It is proved in Section \ref{s:relaxed}.

\begin{proposition}[Half-space relaxed condition]\label{p:halfspace}
Assume that $\ell\subset\mathbb R^3$ is a line through the origin and that $E\subset\mathbb R^3$ is the cone over a connected geodesic net $\N\subset\Sf$ whose pins are the two points of $\ell\cap\partial B_1$. Let $\Psi:E\to\mathbb R^3$ be a Lipschitz map such that:
\begin{itemize}
\item[(i)] $\Psi(x)=x$ outside $B_1$;
\item[(ii)] $\Psi(x)=x$ for every $x\in\ell$;
\item[(iii)] for some open half-space $H_+$ with $\ell\subset\partial H_+$, we have $\Psi(x)=x$ for every $x\in E\cap H_+$ and $\Psi^{-1}(H_+)=E\cap H_+$.
\end{itemize}
Then, for every $\varepsilon>0$, there is a Lipschitz map $G:\mathbb R^3\to\mathbb R^3$ such that:
\begin{itemize}
\item[(a)] $G$ is the identity outside a compact set, and $G(x)=x$ for every $x\in E\setminus B_1$;
\item[(b)] upon setting $W:=\{x:\ G(x)\neq x\}$, we have $\overline{W\cup G(W)}\subset\mathbb R^3\setminus\ell$;
\item[(c)] $\Ha^2\big(G(E\cap\overline B_1)\big)\le\Ha^2\big(\Psi(E\cap\overline B_1)\big)+\varepsilon$.
\end{itemize}
\end{proposition}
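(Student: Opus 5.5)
The plan is to build $G$ in two stages: first push $E\cap\overline B_1$ slightly into the open half-space $H_+$ near $\ell$ by a cheap deformation, then apply $\Psi$ on the part that was moved away from $\ell$. Fix coordinates with $\ell$ the $x_3$-axis and $H_+=\{x_1>0\}$. Since $\Psi$ fixes $E\cap H_+$ and $\Psi^{-1}(H_+)=E\cap H_+$, the points of $E\cap \overline{H_-}$ are sent into $\overline{H_-}$, where $H_-=\{x_1<0\}$. The only problem is near $\ell$: there $\Psi$ may move points of $E\setminus H_+$ arbitrarily close to $\ell$, or onto it. The idea is to shift $\ell$'s neighbourhood slightly into $H_+$, where $\Psi$ does nothing.

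\textbf{Step 1 (translation trick).} For $\delta>0$ small, consider the translation $\tau_\delta(x)=x+\delta e_1$. The line $\ell_\delta=\tau_\delta(\ell)$ lies in $H_+$. Define $\tilde\Psi=\tau_\delta\circ\Psi\circ\tau_\delta^{-1}$ on $\tau_\delta(E)$. Then $\tilde\Psi$ is the identity on $\tau_\delta(E)\cap\{x_1>\delta\}$, and this region contains a neighbourhood of $\ell$ intersected with $\tau_\delta(E)$, away from a region of size $\delta$. Rather than translating the whole set, I would use a Lipschitz map $\sigma_\delta$ of $\mathbb R^3$. It equals $\tau_\delta$ on $\overline{B_{1-\sqrt\delta}}$ and interpolates to the identity on the shell $B_1\setminus B_{1-\sqrt\delta}$, for example $x\mapsto x+\delta\,\chi(|x|)e_1$ with $\chi$ a cutoff. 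Let $\rho$ be a Lipschitz cutoff equal to $1$ on $\overline{B_{1-2\sqrt\delta}}$ and vanishing outside $B_{1-\sqrt\delta}$. Combined with a radial rescaling $\lambda_\delta(x)=(1-3\sqrt\delta)^{-1}x$, we can replace $\Psi$ by a conjugate $\Psi_\delta$ that agrees with the identity outside $B_{1-3\sqrt\delta}$, at a cost in area that tends to $0$.

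\textbf{Step 2 (the competitor).} Set $G=\sigma_\delta^{-1}\circ\Psi_\delta\circ\sigma_\delta$ on $E$, suitably arranged, and extend it to $\mathbb R^3$ by Kirszbraun's theorem so that it is the identity outside a compact set. More concretely, I would take $G=\Psi_\delta'\circ\sigma_\delta$ restricted to $E$, where $\Psi_\delta'$ is $\Psi_\delta$ transported to $\sigma_\delta(E)$. Points of $E$ near $\ell$ are mapped by $\sigma_\delta$ into $H_+$, in the region where $\Psi$ is the identity. Points mapped by $\Psi$ into $\overline{H_-}$ stay at distance comparable to $\delta$ from $\ell_\delta$ only if they came from $E\setminus H_+$. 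This is where condition (iii) enters: its preimage clause ensures that nothing from $E\setminus H_+$ lands in $H_+$, hence nothing lands near the shifted line. The real requirement is that $W\cup G(W)$ stays away from $\ell$ itself, not from $\ell_\delta$. So I would instead keep $\ell$ fixed and apply the shift only to the set $E$: $G=\sigma_\delta^{-1}\circ(\text{$\Psi$ applied in shifted coordinates})$. Near $\ell$ this makes $G$ equal to the translation $\tau_\delta$ composed with its inverse, that is, the identity, on a neighbourhood of $\ell$, except in the thin shell.

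\textbf{Step 3 (area estimate).} The area of the shifted and shelled set differs from $\Ha^2(\Psi(E\cap\overline B_1))$ by three error terms:
\begin{itemize}
\item the distortion of $\sigma_\delta$, whose Lipschitz constant is $1+O(\sqrt\delta)$;
\item the shell region, of area $O(\sqrt\delta)$, since $E$ is a cone with finitely many faces;
\item the rescaling factor, which is $1+O(\sqrt\delta)$.
\end{itemize}
Choosing $\delta$ small in terms of $\varepsilon$ and $\Lip(\Psi)$ gives (c).

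\textbf{Main obstacle.} The delicate point is (b) at the shell near $\partial B_1\cap\ell$, that is, at the pins. There $\sigma_\delta$ must interpolate back to the identity, so it moves points of $E$ near $\ell$ without $\Psi$ being trivial there. I expect to handle this by making the interpolation in the shell move $E$ only within $H_+\cup\ell$-adjacent sectors where $\Psi$ is already the identity. Since the link is connected, a neighbourhood of each pin consists of finitely many planar sectors. Instead of the translation, I would move these sectors by rotating them slightly about an axis in $\partial H_+$ perpendicular to $\ell$. I would also use condition (ii), that $\Psi$ fixes $\ell$, together with the Lipschitz bound, to control points of $E\setminus H_+$ lying at distance $r$ from $\ell$: their images lie at distance at most $\Lip(\Psi)\,r$ from $\ell$. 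This lets the cutoff be chosen logarithmically in $r$, so the extra area is $O(1/|\log\delta|)$.
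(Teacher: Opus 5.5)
There is a genuine gap: neither of the maps you end up writing down satisfies (b) and (c). First take $G=\sigma_\delta^{-1}\circ\Psi_\delta\circ\sigma_\delta$. In general $\sigma_\delta(E)\not\subset E$, so $\Psi_\delta$ is evaluated where it is not defined. Hypothesis (iii) makes $\Psi$ the identity on $E\cap H_+$, not on $H_+$. So your claim that $\sigma_\delta$ sends points of $E$ near $\ell$ ``into the region where $\Psi$ is the identity'' has no basis. Moreover, for any Lipschitz extension of $\Psi$, the area $\Ha^2(G(E\cap\overline B_1))$ equals, up to a factor $1+O(\sqrt\delta)$, the quantity $\Ha^2\big(\Psi_\delta(\sigma_\delta(E)\cap\overline B_1)\big)$. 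This is not controlled by $\Ha^2(\Psi(E\cap\overline B_1))$, because the area of the image under a Lipschitz map is not continuous under small displacements of the domain.

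Now take your ``concrete'' version $G=\Psi'_\delta\circ\sigma_\delta$ with $\Psi'_\delta=\sigma_\delta\circ\Psi_\delta\circ\sigma_\delta^{-1}$, that is, $G=\sigma_\delta\circ\Psi_\delta$. Every $x\in\ell\cap B_{1-3\sqrt\delta}$ is fixed by $\Psi_\delta$ and then moved to $x+\delta e_1$, so $W\cap\ell\neq\emptyset$ and (b) fails. Likewise, the images $\Psi_\delta(x)\in\{x_1\le 0\}$ of points $x\in E\setminus H_+$ are pushed towards, and possibly onto, $\ell$. The shift $+\delta e_1$ goes in the wrong direction. Also, the assertion in Step~1 that $\{x_1>\delta\}$ contains a neighbourhood of $\ell$ is false. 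Finally, the difficulty is not concentrated at the pins: $G$ must be the identity near the whole segment $\ell\cap B_1$, and your proposal has no interpolation there.

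The missing idea is to translate the \emph{image} by $-\rho e_1$, towards $H_-$, and to interpolate with the identity in the variable $\dist(x,\ell)$. The paper sets $G(x)=x+t(x)\big(\Psi(x)-x-\rho\chi(x)e_1\big)$ on $E$, where $t=\min\{1,(\dist(x,\ell)-\rho)^+/\rho\}$ and $\chi$ is a cutoff equal to $1$ on $B_{1-\rho}$ and vanishing outside $B_1$. By (iii), $x\notin H_+$ implies $x,\Psi(x)\in\{x_1\le 0\}$. By convexity, $\langle G(x),e_1\rangle\le -t\rho\chi$, which is negative unless $G(x)=x$. For $x\in H_+$ with $\dist(x,\ell)\ge\rho$, one has $G(x)=x-t\rho\chi e_1$ and $\dist(G(x),\ell)\ge\dist(x,\ell)-t\rho\ge\rho$.

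The estimate you mention at the very end, $|\Psi(x)-x|\le(\Lip(\Psi)+1)\dist(x,\ell)$ from (ii), is exactly what keeps $\Lip(G)$ bounded independently of $\rho$ in the strip $\rho\le\dist(x,\ell)\le 2\rho$. Since $G=\Psi-\rho e_1$ outside a set of area $O(\rho)$, (c) follows with a linear cutoff. No logarithmic cutoff, rotation of sectors, or radial rescaling is needed. The extension from $E$ to $\mathbb R^3$ preserving (b) is then \cite[Proposition 10.1]{DLG}. A bare Kirszbraun extension does not by itself keep $\overline{W\cup G(W)}$ away from $\ell$.
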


\begin{corollary}\label{c:halfspace}
If $E$ is minimizing with respect to $\ell$, then $\Ha^2(E\cap\overline B_1)\le\Ha^2\big(\Psi(E\cap\overline B_1)\big)$ for every $\Psi$ as in Proposition \ref{p:halfspace}.
\end{corollary}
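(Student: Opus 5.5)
The corollary follows from Proposition \ref{p:halfspace} by a limiting argument, arguing by contradiction. Suppose there is a $\Psi$ satisfying (i)--(iii) of Proposition \ref{p:halfspace} with
\[
\Ha^2\big(\Psi(E\cap\overline B_1)\big)<\Ha^2(E\cap\overline B_1)-2\varepsilon
\]
for some $\varepsilon>0$. First, the hypotheses of Proposition \ref{p:halfspace} hold. By Proposition \ref{p:stationary}, $E$ is the cone over a stationary net with two antipodal pins $\ell\cap\partial B_1$. For connectedness, I would note that if the net were disconnected we could work component by component, or invoke the connectedness statement implicit in the stationary-net description. I expect this hypothesis to be routine, or already part of the setting in which the corollary is used.

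Apply Proposition \ref{p:halfspace} with this $\varepsilon$ to get $G$. We need $G$ to be an admissible deformation in Definition \ref{d:Almgren-minimizing}. By (a), $G$ is the identity outside a compact set $K$. By (b), $\overline{W\cup G(W)}$ is contained in $\R^3\setminus\ell$. Since $W\cup G(W)$ is bounded (it lies in $K\cup G(K)$), its closure is a compact set disjoint from $\ell$. Choose a bounded open $V$ containing $\overline{W\cup G(W)}\cup\overline B_1$. Then $\overline{W\cup G(W)}\ssubset V\setminus\ell$, so $G$ is admissible with $U=\R^3$ and $\Gamma=\ell$.

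Next, compare areas in $V$. Since $G=\mathrm{id}$ on $E\setminus B_1$ by (a), we have
\[
G(E)\cap V\subset G(E\cap\overline B_1)\cup\big((E\setminus B_1)\cap V\big).
\]
Hence, using (c),
\[
\Ha^2(G(E)\cap V)\le \Ha^2(\Psi(E\cap\overline B_1))+\varepsilon+\Ha^2((E\setminus B_1)\cap V).
\]
The sphere $\partial B_1\cap E$ is $\Ha^2$-null because $E$ is a cone over a finite net. Therefore
\[
\Ha^2(E\cap V)=\Ha^2(E\cap\overline B_1)+\Ha^2((E\setminus \overline B_1)\cap V).
\]
Combining these with the assumed strict inequality gives $\Ha^2(G(E)\cap V)<\Ha^2(E\cap V)-\varepsilon$, which contradicts \eqref{e:Almgren-minimizing}.

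The only delicate step is the admissibility check: confirming that $\overline{W\cup G(W)}$ is compact and contained in a single open $V\ssubset\R^3$ away from $\ell$. This follows from boundedness and (b), as above. Everything else is bookkeeping, and the proof is a direct consequence of Proposition \ref{p:halfspace}.
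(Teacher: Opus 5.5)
Your argument is correct and is essentially the paper's proof: apply Proposition \ref{p:halfspace}, use (a) and (b) to see that $G$ is admissible with $V$ a large ball, and use $G=\mathrm{id}$ on $E\setminus B_1$ to compare areas in $V$. Arguing by contradiction with $2\varepsilon$ is equivalent to the paper's letting $\varepsilon\downarrow0$, and your remark about connectedness needs no ``component by component'' argument (which would not work as stated), since connectedness of the net is a hypothesis of Proposition \ref{p:halfspace} and holds for minimizing cones by \cite[Proposition 5.1(ii)]{DLG}.
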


The following proposition is proved in Section \ref{s:medial}.

\begin{proposition}[Convex face lemma]\label{p:convex-face}
Let $F\subset\Sf$ be a convex spherical polygon with all angles equal to $\frac{2\pi}{3}$. Let $\Sigma$ be the cone over the sides of $F$. Let $\M(F)$ be the medial axis of $F$: the set of points of $F$ whose distance to $\partial F$ is attained on at least two different sides. Let $\Sigma':=\Cone(\M(F))$.

For every $\delta>0$ there are $R>1$ and a Lipschitz map $\Phi:\Sigma\to\mathbb R^3$ such that:
\begin{itemize}
\item[(1)] $\Phi(x)=x$ for every $x$ on the half-lines through the vertices of $F$;
\item[(2)] $\Phi(x)=x$ for every $x\in\Sigma\setminus B_R$, and $\Phi(\Sigma\cap B_R)\subset B_R$;
\item[(3)] $\Phi(\Sigma)\subset \Cone(F)$;
\item[(4)] $\Phi(\Sigma)\cap B_1=\Sigma'\cap B_1$;
\item[(5)] $\Ha^2\big(\Phi(\Sigma\cap B_R)\big)\le\Ha^2(\Sigma\cap B_R)-\eta(F)+\delta$.
\end{itemize}
Here $\eta(F)\geq 0$ is defined as follows. Let $\sigma_1,\dots,\sigma_k$ be the sides of $F$, and let $n_i$ be the unit normal to the plane containing $\sigma_i$, pointing inside $F$. For $i\neq j$, let $\Sigma'_{ij}$ be the cone over the set of points of $\M(F)$ whose distance to $\partial F$ is attained on both $\sigma_i$ and $\sigma_j$. Then
\[
\eta(F):=\sum_{i<j}\big(|n_i-n_j|-1\big)\,\Ha^2(\Sigma'_{ij}\cap B_1)\, .
\]
Every term of this sum is nonnegative, and the terms with $\sigma_i,\sigma_j$ adjacent vanish. Moreover, $\eta(F)>0$ if and only if no point of $F$ is equidistant from all the sides of $F$.
\end{proposition}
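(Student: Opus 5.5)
The plan is to build $\Phi$ as the cutoff of a one-homogeneous ``collapse'' map. Let $\Phi_0:\Sigma\to\Cone(F)$ be one-homogeneous. On the planar sector $S_i=\Cone(\sigma_i)$, $\Phi_0$ translates each point along $n_i$ by the distance $t$ needed to reach $\Sigma'$. Concretely, we cover $\Cone(F)$ by the cells $C_i$, where $C_i$ is the set of points whose closest boundary plane is the plane $\Pi_i\supset\sigma_i$. Inside $\Cone(F)$ the distance to $\Pi_i$ is $x\cdot n_i$. The cell $C_i$ is exactly the region swept by the segments $\{y+tn_i: y\in S_i,\ 0\le t\le \tau_i(y)\}$, with $\tau_i(y)$ the first time the segment hits $\Sigma'$. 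Since $C_i$ is a convex cone (an intersection of half-spaces $x\cdot n_i\le x\cdot n_j$ together with $\Cone(F)$), the map $\Phi_0(y)=y+\tau_i(y)n_i$ is Lipschitz and one-homogeneous on $S_i$. It maps $S_i$ onto $\partial C_i\cap\Sigma'$, and it fixes the vertex rays, since $\tau_i=0$ there. The maps agree on common rays, so $\Phi_0$ is globally Lipschitz.

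Next we compute the area. The piece $\Sigma'_{ij}$ lies in the bisector plane $\{x\cdot(n_i-n_j)=0\}$, whose unit normal is $(n_i-n_j)/|n_i-n_j|$. The projection along $n_i$ from $\Pi_i$ to this plane has Jacobian $|n_i-n_j|/|n_i\cdot(n_i-n_j)|=|n_i-n_j|/(1-n_i\cdot n_j)=2/|n_i-n_j|$. So the portion of $S_i$ that lands on $\Sigma'_{ij}$ has area $\frac{|n_i-n_j|}{2}\Ha^2(\Sigma'_{ij})$. The same holds for $S_j$. Summing, $\Ha^2(\Sigma\cap B_r)-\Ha^2(\Sigma'\cap B_r)$ equals $r^2\sum_{i<j}(|n_i-n_j|-1)\Ha^2(\Sigma'_{ij}\cap B_1)$, up to a boundary-of-ball correction. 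That correction vanishes when we measure preimages rather than $B_r$ intersections, so we should state the identity for $\Phi_0(\Sigma\cap K)$ with $K$ a cone truncated by $\Phi_0$-compatible regions.

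Nonnegativity needs $|n_i-n_j|\ge1$, i.e. the angle between $n_i$ and $n_j$ is at least $\pi/3$. For adjacent sides meeting at angle $2\pi/3$ the dihedral angle of $\Cone(F)$ is $2\pi/3$, so the normals make angle $\pi/3$ and the term vanishes. For nonadjacent sides of a convex polygon we must argue that the planes $\Pi_i$, $\Pi_j$ meet at a dihedral angle (inside $F$) at most $2\pi/3$. I would try an angle-sum argument: extend the sides between $\sigma_i$ and $\sigma_j$ to form a spherical polygon containing the intermediate vertices, and use that each interior angle is $2\pi/3$ together with spherical excess being positive. This step is the main obstacle, together with the characterization $\eta=0$.

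For that characterization, $\eta(F)=0$ iff every nonadjacent pair has $\Sigma'_{ij}$ of zero area. That happens iff $\M(F)$ has only edges between adjacent sides. Such a tree is a star, which happens iff a single point is equidistant from all sides. Conversely, if an incenter exists, every nonadjacent $\Sigma'_{ij}$ is a ray.

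Finally, we localize with a logarithmic cutoff to handle (2) and (5). Define $\Phi(x)=x+\phi(|x|)(\Phi_0(x)-x)$ with $\phi=1$ on $[0,1]$, $\phi=0$ on $[R,\infty)$, and $\phi(r)=\log(R/r)/\log R$ between. Properties (1), (3), (4) follow from the convexity of the cells along the segments. Property (2) follows since $|\Phi(x)|\le|x|$ by choosing $\Phi_0$ norm-nonincreasing, or after a radial adjustment. For the area in the annulus, the tangential derivatives are bounded and the radial error is $|\phi'(r)|\,|\Phi_0(x)-x|\lesssim r/(r\log R)$. Hence the area of the image of $\Sigma\cap(B_R\setminus B_1)$ exceeds the linear interpolation bound $\int_1^R(\text{area density of }\Phi_0\text{ or id})$ by at most $C\int_1^R r\,dr/\log R$. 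That is too big, so we instead compare with the monotone homotopy: the image of each slice at radius $r$ is an interpolated net whose length is at most $\max(\text{length}(\partial F),\text{length}(\M))=\text{length}(\partial F)$ plus $O(1/\log R)$. Integrating $r\,dr$ against that slice estimate, and writing the gain on $B_1$ as $\eta(F)$ (more precisely $\eta$ times $1$, since we can rescale), gives (5) with $\delta\sim C R^2/\log R$. We compensate by rescaling: build the cutoff on $[\rho,\rho R']$ and then dilate, so that the core region $B_1$ carries the full gain. I expect fixing this bookkeeping so that the error is $o(1)$ relative to the gain to require the slice-length monotonicity, i.e. that interpolating each sector segment toward the medial axis never lengthens the slice.
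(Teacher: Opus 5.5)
Your collapse map $\Phi_0$ and the Jacobian $2/|n_i-n_j|$ are correct; they are the pointwise form of the paper's divergence-theorem identity. Two steps do not go through, however.

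\textbf{The coefficient step.} You are trying to prove an inequality that is false: it is not true that $|n_i-n_j|\ge1$ for every pair of non-adjacent sides. Take $n_{1,3}=(0,\pm u,v)$ and $n_{2,4}=(\pm s,0,t)$ with $u^2+v^2=s^2+t^2=1$, $vt=\frac12$ and $t$ close to $1$. This is an equiangular quadrilateral, obtained from a biangle of angle close to $\frac{\pi}{3}$ by cutting off its two tips. Its two short sides have $|n_2-n_4|=2s<1$. What is true, and all the statement needs, is $|n_i-n_j|\ge1$ whenever $\Ha^2(\Sigma'_{ij})>0$. In the thin case you must instead show that the cells do not meet. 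Indeed, $C_2\cap C_4\subset\{x_1=0\}$, and on that plane $\min\{\langle x,n_1\rangle,\langle x,n_3\rangle\}=vx_3-u|x_2|<tx_3=\langle x,n_2\rangle$, so $C_2\cap C_4=\{0\}$.

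Your spherical-excess idea also points the wrong way. Extending $\sigma_i,\sigma_j$ beyond the side $\sigma_m$ between them gives a triangle with base angles $\frac{\pi}{3}$ and apex angle $\gamma=\pi-\angle(n_i,n_j)$. Positive excess gives $\gamma>\frac{\pi}{3}$, whereas you need $\gamma\le\frac{2\pi}{3}$. By the cosine rule, $\cos\angle(n_i,n_j)=\frac14-\frac34\cos|\sigma_m|$, so what you need is the upper bound $|\sigma_m|\le\lambda_T=\arccos(-\frac13)$ on side lengths. The paper uses this bound for pentagons, taking it from the prequel. It fails for thin quadrilaterals, which is why quadrilaterals need the separate rhombus argument.

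\textbf{The localization.} It cannot be done with $\Phi=x+\phi(|x|)(\Phi_0(x)-x)$. On the part of $\Cone(\sigma_i)$ pushed onto $\Sigma'_{ij}$ with $\sigma_j$ adjacent, $\tau_i(y)=2\langle y,n_j\rangle$, whose gradient along $\Cone(\sigma_i)$ has length $\sqrt3$. So wherever $\phi$ takes a value in $(0,1)$, your map turns this part into a graph of slope about $\sqrt3\,\phi$, with area factor about $\sqrt{1+3\phi^2}$. Its image lies in the interior of $C_i$, so it does not yet overlap the image of $\Cone(\sigma_j)$. Slices therefore get longer by a fixed fraction, not shorter, and the annulus costs an amount that grows with $R$. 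Even the bound $CR^2/\log R$ you arrive at would be useless. Dilating cannot help, because the gain and the error scale by the same factor.

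The missing idea is to truncate the push instead of interpolating it. Let $d(x)=\min_i\langle x,n_i\rangle$ and let $\psi$ be a logarithmic cutoff equal to $2$ on $[0,2]$ and to $0$ beyond $2e^L$. Move $x\in\Cone(\sigma_j)$ along $n_j$ until it hits either $\Sigma'$ or the front $S=\overline{\{x\in\Cone(F):\ d(x)=\psi(|x|)>0\}}$. The image is then $S\cup(\Sigma'\cap\overline\Omega)$, where $\Omega=\{x\in\Cone(F):\ d(x)<\psi(|x|)\}$. Apply the divergence theorem to the constant field $X:=n_j$ on each cell $C_j\cap\Omega$. This gives an exact area identity in which adjacent pairs cancel because $|n_i-n_j|=1$. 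The only error is $\int_S(1-\langle X,\nu_S\rangle)\le C\int_2^{2e^L}\psi'(r)^2\,r\,dr\le C/L$, which is quadratic in the slope of the cutoff.

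\textbf{Two smaller points.} First, $\Phi_0$ increases norms, since $|y+\tau n_i|^2=|y|^2+\tau^2$. So (2) comes from taking $R$ slightly larger than $2e^L$ (points move by at most $2$), not from a norm-nonincreasing property. Second, ``such a tree is a star'' in your characterization of $\eta(F)=0$ needs an argument. For example: each cell $C_j\cap F$ is then bounded by $\sigma_j$ and two arcs meeting at a point $p_j$, and $p_j=p_{j+1}$, so all the $p_j$ coincide.
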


Combining the last two propositions we prove in Section \ref{s:faces} the following proposition.

\begin{proposition}[Faces with angles $\frac{2\pi}{3}$]\label{t:faces}
Let $\ell\subset\mathbb R^3$ be a line through the origin, and let $E\supset\ell$ be a cone which is minimizing with respect to $\ell$. Then every equiangular face of the link $E\cap\partial B_1$ is a biangle or a triangle.
\end{proposition}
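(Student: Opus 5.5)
We argue by contradiction. Suppose $F$ is an equiangular face of the link $\N=E\cap\partial B_1$ with $k\geq 4$ sides. First we reduce to the setting of Proposition \ref{p:convex-face}. A face all of whose angles are $\frac{2\pi}{3}<\pi$ and whose sides are geodesic arcs is locally convex at every boundary point. We will show that $\partial F$ is a single simple closed curve; for this we use that $\N$ is connected, which follows from Proposition \ref{p:stationary} and the fact that stationary nets with antipodal pins have connected links, as in \cite{DLG}. Then a locally convex polygon in $\Sf$ with angles $<\pi$ is convex and contained in an open hemisphere, by Gauss--Bonnet: the area equals $2\pi-\sum(\pi-\tfrac{2\pi}{3})=2\pi-\tfrac{k\pi}{3}$. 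This forces $k\leq 5$, and $F$ is a convex quadrilateral or pentagon with all angles $\frac{2\pi}{3}$.

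Next we build the competitor. Let $\Sigma=\Cone(\partial F)\subset E$, fix $\delta>0$, and take $R,\Phi$ from Proposition \ref{p:convex-face}. After rescaling by $1/R$ we may assume $R=1$, and $\Phi(\Sigma\cap B_{1/R})$ then agrees with $\Cone(\M(F))$ in a small ball. Define $\Psi$ on $E$ by $\Psi=\Phi$ on $\Sigma$ and $\Psi=\mathrm{id}$ on the rest of $E$. This map is well defined and Lipschitz, because by (1) $\Phi$ fixes the half-lines through the vertices of $F$, which is where $\Sigma$ meets the rest of $E$.

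We now check the hypotheses of Proposition \ref{p:halfspace}.
\begin{itemize}
\item[(i)] This follows from (2).
\item[(ii)] If $\ell$ meets $\Sigma$, it does so only through a pin lying at a vertex of $F$, since a pin of degree $2$ with angle $\frac{2\pi}{3}$ is a vertex of $F$. Such points are fixed by (1).
\item[(iii)] Choose $H_+$ to be an open half-space with $\ell\subset\partial H_+$ and $\Cone(F)\cap H_+=\emptyset$. This is possible because $F$ is convex, lies in an open hemisphere, and contains at most one pin, which then sits at a vertex. By (3), $\Psi$ moves points only inside the closed cone $\Cone(F)$, which lies in the complement of $H_+$. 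Hence $\Psi^{-1}(H_+)=E\cap H_+$.
\end{itemize}
The remaining point is the claim that if $F$ contains a pin (at a vertex) then a plane through $\ell$ supports $\Cone(F)$. This holds because the interior angle there is $<\pi$.

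Corollary \ref{c:halfspace} combined with (5) gives $\Ha^2(E\cap\overline B_1)\leq \Ha^2(E\cap \overline B_1)-\eta(F)+\delta$ for every $\delta>0$, so $\eta(F)=0$. Self-intersections of $\Psi(E)$ only lower its area, so the inequality is unaffected. By Proposition \ref{p:convex-face}, some point $p\in F$ is then equidistant from all $k$ sides, and the medial axis $\M(F)$ is a star of $k$ geodesic arcs from $p$ to the vertices. Then $E':=\Psi(E)$ coincides near the ray through $p$ with $k\geq4$ half-planes bounded by a line, and $\Ha^2(E'\cap\overline B_1)\leq\Ha^2(E\cap\overline B_1)+\delta$.

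To obtain a contradiction we modify $E'$ near a point $q\neq0$ on the ray through $p$, where $E'$ is locally $k\geq4$ half-planes meeting along a line away from $\ell$. Since a union of $k\ge4$ half-planes along a common line is not Almgren minimizing, there is a fixed deformation $\Theta$ supported in a small ball $B_r(q)$, away from $\ell$ and inside $B_{1/R}$, that reduces area by some $c\,r^2>0$. The amount $c$ depends only on the configuration of half-planes, which is fixed, and not on $\delta$. Then $\Theta\circ\Psi$ still satisfies (i)--(iii) once the ball is chosen inside $\Cone(F)$ away from $\partial H_+$. This gives $\Ha^2(E\cap \overline B_1)\leq \Ha^2(E\cap\overline B_1)+\delta-c r^2$, which is a contradiction for $\delta<cr^2$.

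The main obstacle is the topological reduction: proving that an equiangular face is a single simply connected convex polygon, together with the bookkeeping at a pin lying on $\partial F$ that is needed to place $H_+$. The final non-minimality step also requires that $r$ and $c$ be chosen before $\delta$. This is legitimate because near the ray through $p$ the map $\Phi$ produces exactly the cone over $\M(F)$, by (4), whose geometry is independent of $\delta$.
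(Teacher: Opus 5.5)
Your proposal is correct and follows essentially the paper's route: convexity and Gauss--Bonnet to reduce to $k\in\{4,5\}$, extension of $\Phi$ by the identity, a half-space through $\ell$ disjoint from $\Cone(F)$, Corollary~\ref{c:halfspace} with property (5) to force $\eta(F)=0$, and then a local deformation of the $k\geq4$ half-planes whose gain is fixed before $\delta$. For the half-space, the paper separates $\ell$ from the open convex set $\mathrm{int}\,\Cone(F)$, which is cleaner than your case distinction. There is one bookkeeping slip: since $R=R(\delta)\to\infty$, after rescaling by $1/R$ your ball cannot be fixed inside $B_{1/R}$, and the $\delta$ term picks up a factor $R^{-2}$ that you dropped, so as literally written the final inequality gives no contradiction. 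Either keep that factor and take $r=r_0/R$, or work unscaled as the paper does, with the ball fixed in $B_1$ and areas compared in $B_R$; this gives $\Ha^2(E\cap B_R)\le\Ha^2(E\cap B_R)+\delta-c r_0^2$ with $r_0$ and $c$ independent of $\delta$.
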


The idea is the following. Let $F$ be an equiangular face with $k\in \{4,5\}$ sides. We deform the cone over $\partial F$ with the map $\Phi$ of Proposition \ref{p:convex-face}, leaving the rest of $E$ fixed. Since $\ell$ does not meet the interior of $\Cone(F)$, there is a half-space $H_+$ with $\ell\subset\partial H_+$ and $H_+\cap \Cone(F)=\emptyset$, so Proposition \ref{p:halfspace} applies. If $\eta(F)>0$, we gain area, which contradicts Corollary \ref{c:halfspace}. If $\eta(F)=0$, the medial axis of $F$ consists of $k$ arcs meeting at a point $p_0$. Close to the point $q = \frac{1}{2} p_0$, the deformed set is a union of $k\geq 4$ half-planes bounded by a line. Two of these half-planes form an angle smaller than $\frac{2\pi}{3}$, so a further local deformation gains area, again contradicting Corollary \ref{c:halfspace}.

Proposition \ref{t:faces} leaves only a problem about stationary nets, which is solved by the following proposition.

\begin{proposition}\label{p:classification}
Let $\N$ be a stationary net with two antipodal pins such that every equiangular face of $\N$ is a biangle or a triangle. Then $\N$ is, up to a rotation, one of the following:
\begin{itemize}
\item[(a)] one half-equator joining the pins;
\item[(b)] two half-equators joining the pins, forming an angle $\geq\frac{2\pi}3$;
\item[(c)] three half-equators joining the pins, forming angles $\frac{2\pi}3$;
\item[(d)] the tetrahedral net, with the pins at the midpoints of two opposite edges.
\end{itemize}
\end{proposition}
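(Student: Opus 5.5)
The plan is a purely combinatorial--geometric classification based on Gauss--Bonnet for spherical polygons. Let $V$, $E$, $F$ denote the numbers of vertices, edges and faces of $\N$, with the two pins counted as vertices (a pin of degree $2$ is an honest vertex; one of degree $1$ is a vertex in the interior of a face boundary). First dispose of the degenerate cases. If $\N$ has no vertex besides the pins, every edge joins $P$ to $-P$; such an edge is a half great circle, so it is a half-equator. The angle condition at the pins then gives (a), (b) or (c) according to the number of edges being $1$, $2$ or $3$. Note that if every edge has length $\pi$ there is nothing to check about faces. Assume from now on that $\N$ has at least one non-pin vertex. Every such vertex has degree $3$ with angles $\frac{2\pi}{3}$.

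The key tool is Gauss--Bonnet: a face $\Omega$ that is a spherical polygon with interior angles $\alpha_1,\dots,\alpha_k$ satisfies
\[
\Ha^2(\Omega)=\sum_{i}\alpha_i-(k-2)\pi ,
\]
where pins of degree $1$ contribute an angle $2\pi$ and must be handled with the appropriate conventions. For a face all of whose vertices are non-pins, positivity of area forces $k\cdot\frac{2\pi}{3}>(k-2)\pi$, i.e. $k\le 5$. Such a face is automatically equiangular, so by hypothesis $k\in\{2,3\}$. Every angle is at most $\pi$ there (a pin of degree $2$ or $3$ also has angles $\le\pi$, or $\frac{2\pi}{3}$ for degree $3$), so such faces are convex and we can speak of biangles and triangles with angles $\frac{2\pi}{3}$. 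A triangle with all angles $\frac{2\pi}{3}$ has area $\pi$ and is the regular one, namely a face of the tetrahedral net. A biangle with angles $\frac{2\pi}{3}$ is a lune, which has antipodal vertices and area $\frac{4\pi}{3}$. In particular each edge of a non-pin face has a length determined by rigidity.

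The argument then splits according to the faces touching the pins. Because $P$ and $-P$ are antipodal and distinct, and $\N$ is connected (disconnection is excluded by stationarity; I would check this via a first-variation argument, or it comes from Proposition~\ref{p:stationary}), at most a small number of faces meet a pin: $1$, $2$ or $3$ faces at each pin according to its degree. Sum Gauss--Bonnet over all faces, giving $4\pi$ total area. Combining this with Euler's formula $V-E+F=2$ and the degree count $2E=3(V-2)+d_P+d_{-P}$ should bound the total number of faces by a small number. The only contribution exceeding the ``$\frac{2\pi}{3}$ per corner'' budget comes from pin angles larger than $\frac{2\pi}{3}$; these are at most $2\pi$ each and there are at most finitely many of them. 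Since every face not containing a pin is a biangle or triangle, we get a short list of combinatorial types. Case by case we then try to realize each type geometrically: faces meeting a pin are polygons with prescribed angles except at the pin, and their sides are determined by the rigid neighboring triangles and lunes. Lunes force antipodal vertices, which quickly conflicts with the angle conditions unless the net is the three-half-equator configuration. This elimination should leave only the tetrahedral net with $P$ and $-P$ at the midpoints of opposite edges (pins of degree $2$ with angle $\pi$).

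The main obstacle is the case analysis for faces containing the pins. Their angle at the pin is unconstrained (any value $\ge\frac{2\pi}{3}$), so these faces are not rigid and may have up to five or more sides. I would first try to show that each pin lies on an edge with straight angle $\pi$ unless the net is one of (a)--(c). To do this, use antipodality: reflecting through the origin, combined with the area count, should show the two pin-faces have matching structure. Then delete the pins, merging the two edges through each pin into one. The result is a stationary net without pins all of whose faces have at most five sides; Gauss--Bonnet forces every face to have at most five sides. The classification of such nets (Taylor's list: great circle, $Y$-net, tetrahedral, cube, prism/fish types) applies, and the hypothesis that equiangular faces are biangles or triangles removes everything but the tetrahedron. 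A remaining subcase, where a pin has degree $1$ or $3$ together with non-pin vertices, is handled by direct geometric contradiction with the antipodal position of $-P$.
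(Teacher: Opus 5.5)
There is a genuine gap. The decisive steps are only announced with ``should'', and the mechanisms you suggest for them do not work. Summing Gauss--Bonnet over all faces gives $\sum\alpha-\pi\sum k+2\pi F=2\pi(V-E+F)$, so ``total area $4\pi$'' is just Euler's formula again and carries no information. Only the areas of single faces are informative. The pin-free faces are equiangular, hence by hypothesis biangles or triangles of area $\frac{4\pi}{3}$ or $\pi$, so there are at most four of them. This makes the combinatorics finite. But the faces through the pins have free angles there, so every combinatorial type is a continuous family, and the ``case by case'' realization you postpone is the actual content of the proposition.

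The idea you are missing is a way to eliminate the pin-free faces altogether. The paper does this by induction on the number of vertices. Replacing the sides of a pin-free biangle or triangle $T$ by its medial axis gives a stationary net with the same pins, two vertices fewer and the same hypothesis (Lemma \ref{l:reduction}). By induction it is one of (a)--(d), which edge lengths rule out: either the center of $T$ would be a non-pin vertex all of whose edges are longer than $\frac{\lambda_T}{2}$, or some edge would be longer than $\pi$. Once every face touches a pin, let $n$ be the number of non-pin vertices and $D$ the sum of the pin degrees. Then $f\le D\le n$ together with Euler's $f=\frac{n+D}{2}$ forces $n=D\le 4$, with each non-pin vertex joined to exactly one pin. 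Deleting the pins leaves a single cycle with $2$, $3$ or $4$ vertices.

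Your reduction to Taylor's list needs the pins to be flat, and you give no argument for it. $\N$ is not assumed invariant under $x\mapsto -x$, so reflecting through the origin gives nothing, and the area count is vacuous. The tool that works is the balancing identity $S(P)=S(-P)$ of Lemma \ref{l:balancing}, obtained by summing the conserved vector $\gamma\times\gamma'$ over all edges. If the pins were not flat, balancing would put the four pin edges on the two sides of a biangle with vertices $\pm P$ and angle $<\pi$. If all vertices lie in that biangle, Lemma \ref{l:lune} produces two edges longer than $\pi$ that would have to cross. That hypothesis is known only after the reduction to a four-cycle. So the paper proves flatness only there, where erasing the pins leaves a trivalent equiangular net with four vertices, necessarily $K_4$ and hence tetrahedral; Taylor's classification is never needed.

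Your plan also leaves degree-$1$ pins open, since there is nothing to merge, and ``direct geometric contradiction'' is not an argument. In the paper, pins of degrees $1$ and $2$ are excluded because balancing forces the angle $\frac{2\pi}{3}$ at $P$ and then yields an equiangular quadrilateral face $PBAC$. A degree-$3$ pin is excluded because its three faces are equilateral triangles spanning a tetrahedral net in which $-P$ is a face center.

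Finally, even granting flat pins and Taylor's classification, the last step is not automatic. The faces through a flat pin have angle $\pi$ there, so they are not equiangular in $\N$ and the hypothesis says nothing about them. You would still have to find, for each of the ten nets, the antipodal pairs lying on it, and check that some face they miss is a quadrilateral or a pentagon. For the triangular prism, for instance, the missed face could a priori be a triangle.
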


The proof, given in Section \ref{s:classification}, is by induction on the number of vertices. Nets with two vertices are elementary, and a pin of degree $3$ is easily excluded. If $\N$ has a face with no pins on its boundary, this face is equiangular, hence it is a biangle or a triangle. Replacing it with its medial axis we obtain a net with fewer vertices and with the same property, which by induction is in the list above; this turns out to be impossible. If every face of $\N$ has a pin on its boundary, a counting argument shows that removing the pins and the edges ending at them leaves a single cycle with $2$, $3$, or $4$ vertices. The cycle with $2$ vertices is easily excluded. The cycles with $3$ and $4$ vertices are treated with a balancing identity for the two pins (Lemma \ref{l:balancing}): the first one would have an equiangular quadrilateral face, and the second one is the tetrahedral net (Lemma \ref{l:four-cycle}).

We can now conclude.

\begin{proof}[Proof of Theorem \ref{t:main}]
Let $\N:=E\cap\partial B_1$. By Proposition \ref{p:stationary}, $\N$ is a stationary net with two antipodal pins, the points of $\ell\cap\partial B_1$. By Proposition \ref{t:faces}, every equiangular face of $\N$ is a biangle or a triangle. By Proposition \ref{p:classification}, $\N$ is, up to a rotation, one of the nets (a)--(d) listed there. Since $E$ is a closed cone, $E=\{tx:\ t\geq0,\ x\in\N\}$, and hence $E$ is the corresponding cone (a)--(d) of Theorem \ref{t:main}. In case (d) the pins are the midpoints of two opposite edges of the tetrahedral net, so $\ell$ is the line through them.

Conversely, as recalled in \cite[Remark 1.5]{DLG}, the cones (a) and (b) are minimizing with respect to $\ell$ by an elementary argument, while the cones (c) and (d) are minimizing with respect to $\emptyset$ by \cite{Taylor}, hence with respect to $\ell$.
\end{proof}

\subsection{Relation with \cite{DLG}} The main novel idea of this paper is contained in Proposition \ref{p:convex-face} and in its use in Proposition \ref{t:faces} (Sections \ref{s:medial} and \ref{s:faces}). They replace most of the analysis of \cite{DLG}, including all its computer-assisted parts. The other ingredients are taken from \cite{DLG}, or are closely inspired by it:
\begin{itemize}
\item Proposition \ref{p:stationary} is \cite[Proposition 4.3]{DLG}.
\item Proposition \ref{p:halfspace} is a particular case of the wedge relaxed condition \cite[Proposition 10.5]{DLG}, and its proof in Section \ref{s:relaxed} is a simplified version of the one in \cite[Section 10]{DLG}.
\item In Section \ref{s:classification} we use the elementary properties of stationary nets with two antipodal pins proved in \cite[Section 5]{DLG}. The nets in which every face has a pin on its boundary are reduced to a cycle with at most four vertices as in \cite[Section 14.3]{DLG}. The cycles with three and four vertices are treated with a new argument based on a balancing identity (Lemmas \ref{l:balancing}--\ref{l:four-cycle}), which replaces \cite[Sections 14.4 and 14.5]{DLG}, including the computer-assisted \cite[Lemma 14.2]{DLG}.
\item The medial axis reduction (Lemma \ref{l:reduction}) is used in the proof of Proposition \ref{p:classification}. The case of biangles is already used in \cite[proof of Proposition 15.8]{DLG}. The case of triangles was also discovered by the human authors, who then omitted it in the manuscript \cite{DLG} because its consequences are covered by other arguments. It is a simple combinatorial counterpart of the idea of Section \ref{s:medial}.
\end{itemize}

\section{Replacing a face with its medial axis}\label{s:medial}

The ideas of this section are similar to known ones, although, as far as we know, Proposition \ref{p:convex-face} and its use in Proposition \ref{t:faces} are new. The vector field $X$ used below, which is constant on each cell of the decomposition of $\Cone(F)$ induced by the medial axis, is in the spirit of the paired calibrations of Lawlor and Morgan \cite{LM}. The map $\Phi$, which pushes the boundary along the inward normals until it reaches the medial axis, is a classical object in the theory of the medial axis (see e.g. \cite{CCM, Merigot}), and identities relating integrals over a boundary to integrals over its medial axis appear in \cite{Damon}.

The construction of Proposition \ref{p:convex-face} is illustrated in Figure \ref{f:medial}.

\begin{figure}[ht]
\centering
\begin{tikzpicture}[
  side/.style={line width=1.1pt, line join=round},
  medial/.style={color=blue!65!black, line width=0.8pt, dash pattern=on 3pt off 2pt},
  secm/.style={color=blue!65!black, line width=1.1pt, line join=round},
  ghost/.style={gray!60, line width=0.4pt, dash pattern=on 1pt off 1.5pt},
  push/.style={gray, line width=0.45pt, -{Stealth[length=3.2pt, width=2.4pt]}},
  lab/.style={font=\small, inner sep=1.5pt},
  >={Stealth[length=3.2pt, width=2.4pt]}
]
\begin{scope}[xshift=3.0cm]

  \draw[push] (0.990,-0.075) -- (1.092,1.275);
  \draw[push] (1.650,-0.125) -- (1.782,1.619);
  \draw[push] (2.310,-0.175) -- (2.412,1.169);
  \draw[push] (3.632,0.590) -- (2.492,1.042);
  \draw[push] (3.917,1.310) -- (2.834,1.739);
  \draw[push] (3.370,2.790) -- (2.544,1.655);
  \draw[push] (2.930,3.110) -- (1.912,1.711);
  \draw[push] (0.750,3.050) -- (1.547,1.569);
  \draw[push] (-0.275,1.175) -- (1.260,1.534);
  \draw[medial] (-0.550,2.350) -- (1.299,1.517);
  \draw[medial] (0.000,0.000) -- (1.299,1.517);
  \draw[medial] (3.300,-0.250) -- (2.175,1.548);
  \draw[medial] (4.250,2.150) -- (2.175,1.548);
  \draw[medial] (2.050,3.750) -- (1.908,1.646);
  \draw[medial] (1.299,1.517) -- (1.908,1.646) -- (2.175,1.548);
  \draw[side] (0,0) -- (3.3,-0.25) -- (4.25,2.15) -- (2.05,3.75)
              -- (-0.55,2.35) -- cycle;
  \node[lab] at (0.62,2.25) {$F$};
  \node[lab, below] at (1.65,-0.15) {$\sigma_j$};
  \node[lab, right] at (1.72,0.55) {$n_j$};
  \draw[gray, line width=0.4pt] (3.05,3.95) -- (2.12,2.95);
  \node[lab, above right] at (2.95,3.85) {$\mathcal{M}(F)$};
\end{scope}
\begin{scope}[xshift=0.0cm, yshift=-3.6cm, scale=0.55]
  \draw[ghost] (0,0) -- (3.3,-0.25) -- (4.25,2.15) -- (2.05,3.75) -- (-0.55,2.35) -- cycle;
  \draw[secm] (-0.550,2.350) -- (1.299,1.517);
  \draw[secm] (0.000,0.000) -- (1.299,1.517);
  \draw[secm] (3.300,-0.250) -- (2.175,1.548);
  \draw[secm] (4.250,2.150) -- (2.175,1.548);
  \draw[secm] (2.050,3.750) -- (1.908,1.646);
  \draw[secm] (1.299,1.517) -- (1.908,1.646) -- (2.175,1.548);
  \node[lab] at (1.85,-1.0) {$|x|\le 2$};
\end{scope}
\begin{scope}[xshift=3.6cm, yshift=-3.6cm, scale=0.55]
  \draw[ghost] (0,0) -- (3.3,-0.25) -- (4.25,2.15) -- (2.05,3.75) -- (-0.55,2.35) -- cycle;
  \draw[secm] (0.444,0.518) -- (2.938,0.329) -- (3.582,1.956) -- (2.006,3.102) -- (0.081,2.065) -- cycle;
  \draw[secm] (0.000,0.000) -- (0.444,0.518);
  \draw[secm] (3.300,-0.250) -- (2.938,0.329);
  \draw[secm] (4.250,2.150) -- (3.582,1.956);
  \draw[secm] (2.050,3.750) -- (2.006,3.102);
  \draw[secm] (-0.550,2.350) -- (0.081,2.065);
  \node[lab] at (1.85,-1.0) {$2<|x|<2e^L$};
\end{scope}
\begin{scope}[xshift=7.2cm, yshift=-3.6cm, scale=0.55]
  \draw[secm] (0,0) -- (3.3,-0.25) -- (4.25,2.15) -- (2.05,3.75) -- (-0.55,2.35) -- cycle;
  \node[lab] at (1.85,-1.0) {$|x|\ge 2e^L$};
\end{scope}
\end{tikzpicture}
\caption{Top: a schematic picture of a face $F$ with angles $\frac{2\pi}{3}$ and
its medial axis $\mathcal{M}(F)$ (dashed). Each point of a side $\sigma_j$ is
pushed along the inward normal $n_j$ until it reaches $\mathcal{M}(F)$.
Bottom: the intersection of $\Phi(\Sigma)$ with the sphere $\partial B_{|x|}$,
rescaled, for three values of $|x|$. Close to the origin it is the medial axis.
At intermediate distances it consists of the points at distance
$\psi(|x|)$ from $\partial \Cone(F)$, i.e.\ the intersection with the surface $S$,
joined to the vertices by pieces of $\mathcal{M}(F)$. Far from the origin
nothing moves.}
\label{f:medial}
\end{figure}

\subsection{Proof of Proposition \ref{p:convex-face}}

{\bf Setup.} By the Gauss--Bonnet formula $k\in\{2,3,4,5\}$. In the proof we regard $\Cone(F)$, $\M(F)$, $\Sigma'$ and $\Sigma'_{ij}$ as closed sets, i.e. we replace them with their closures. Note that $\Cone(F)=\bigcap_j\{\langle x,n_j\rangle\ge0\}$. For $x\in \Cone(F)$ set
\[
d(x):=\min_j\langle x,n_j\rangle\, ,
\]
let $V_j:=\{x\in \Cone(F):\ \langle x,n_j\rangle=d(x)\}$, which is a convex polyhedral cone, and let $X:=n_j$ on $V_j$. Note that $\Sigma\cap V_j$ is the cone over $\sigma_j$. If $x\in V_j$, the normal projection $y:=x-d(x)n_j$ of $x$ on the plane of $\sigma_j$ satisfies $\langle y,n_i\rangle\ge d(x)(1-\langle n_j,n_i\rangle)\ge0$ for every $i$, so $y\in\Cone(\sigma_j)$. Since the distance of $x$ from the plane of $\sigma_i$ is $\langle x,n_i\rangle$ and $\partial\Cone(F)=\bigcup_i\Cone(\sigma_i)$, it follows that $d(x)=\dist(x,\partial\Cone(F))$, and that this distance is attained on $\Cone(\sigma_j)$ if and only if $x\in V_j$. For $x\in F$, the sides closest to $x$ in the spherical distance are those whose cones are closest to $x$ in the Euclidean distance, so the same holds for the spherical distance from $x$ to $\partial F$. Consequently $\Sigma'_{ij}=V_i\cap V_j$, which lies in the plane $\{\langle x,n_i-n_j\rangle=0\}$, and $\Sigma'=\bigcup_{i\neq j}\Sigma'_{ij}$. Since two adjacent sides meet at angle $\frac{2\pi}{3}$, the angle between their normals is $\frac{\pi}{3}$, and hence
\begin{equation}\label{e:adjacent}
|n_i-n_j|=1\qquad\text{if $\sigma_i$ and $\sigma_j$ are adjacent.}
\end{equation}

\medskip

{\bf Step 1. The region $\Omega$.} Fix $L\ge2$ and let $\psi$ be the standard logarithmic cutoff
\[
\psi(r):=\begin{cases}2,& 0\le r\le2,\\ 2\big(1-\frac{\log(r/2)}{L}\big),& 2\le r\le2e^L,\\ 0,& r\ge2e^L,\end{cases}
\]
so that $|\psi'(r)|\le\frac{2}{Lr}\le 1$. Set
\[
\Omega:=\{x\in \Cone(F):\ d(x)<\psi(|x|)\}\, ,\qquad S:=\overline{\{x\in \Cone(F):\ d(x)=\psi(|x|)>0\}}\, .
\]
Then $\Omega$ is bounded and $\partial\Omega=(\Sigma\cap B_{2e^L})\cup S$. Since $d(x)\le|x|$, we have $\Cone(F)\cap B_2\subset\Omega$, so $S\cap B_2=\emptyset$ and $\Sigma'\cap B_1\subset\Omega$. On $V_j$ the surface $S$ is the graph $\{\langle x,n_j\rangle=\psi(|x|)\}$ over $n_j^\perp$, with slope at most $1$, and its normal $\nu_S$ pointing out of $\Omega$ is parallel to $n_j-\psi'(|x|)\frac{x}{|x|}$. So $1-\langle X,\nu_S\rangle\le\psi'(|x|)^2$, and
\begin{equation}\label{e:defect}
\int_S\big(1-\langle X,\nu_S\rangle\big)\le C\int_2^{2e^L}\psi'(r)^2\,r\,dr\le\frac{C}{L}\, .
\end{equation}

\medskip

{\bf Step 2. An area identity.} We apply the divergence theorem to the constant field $n_j$ on $V_j\cap\Omega$. On $\Sigma\cap B_{2e^L}\cap V_j$ the outer normal is $-n_j$, and on $\Sigma'_{ij}\cap\Omega$ it is $\frac{n_j-n_i}{|n_j-n_i|}$, whose scalar product with $n_j$ is $\frac12|n_i-n_j|$. Hence
\[
0=-\Ha^2(\Sigma\cap B_{2e^L}\cap V_j)+\int_{S\cap V_j}\langle n_j,\nu_S\rangle+\sum_{i\neq j}\tfrac12|n_i-n_j|\,\Ha^2(\Sigma'_{ij}\cap\Omega)\, .
\]
Summing over $j$ we get
\begin{equation}\label{e:area-identity}
\begin{split}
\Ha^2(S)+\Ha^2(\Sigma'\cap\Omega)={}&\Ha^2(\Sigma\cap B_{2e^L})+\int_S\big(1-\langle X,\nu_S\rangle\big)\\
&-\sum_{i<j}\big(|n_i-n_j|-1\big)\,\Ha^2(\Sigma'_{ij}\cap\Omega)\, .
\end{split}
\end{equation}
In other words, a piece of $\Sigma'_{ij}$ of unit area is reached by two pieces of $\Sigma$ of total area $|n_i-n_j|$, so the collision saves $|n_i-n_j|-1$ per unit area.

\medskip

{\bf Step 3. The coefficients.} We claim that $|n_i-n_j|\geq 1$ whenever $\Ha^2(\Sigma'_{ij})>0$, with strict inequality if $\sigma_i$ and $\sigma_j$ are not adjacent. For biangles and triangles any two sides are adjacent and the claim is \eqref{e:adjacent}. For pentagons and quadrilaterals we use the polar polygon, whose vertices are $n_1, \ldots, n_k$, whose sides measure $\frac{\pi}{3}$, and whose angle at $n_m$ is $\pi-|\sigma_m|$.

If $F$ is a pentagon, any two non-adjacent sides $\sigma_i$ and $\sigma_j$ are separated by a single side $\sigma_m$, and $n_i, n_m, n_j$ are consecutive vertices of the polar polygon. The spherical cosine rule gives
\[
\cos\angle(n_i,n_j)=\tfrac14+\tfrac34\cos(\pi-|\sigma_m|)=\tfrac14-\tfrac34\cos|\sigma_m|\, ,
\]
which is smaller than $\frac12$, because $|\sigma_m|<\lambda_T:=\arccos(-\frac13)$ \cite[Proposition 6.8]{DLG}. Hence $\angle(n_i,n_j)>\frac{\pi}{3}$, i.e. $|n_i-n_j|>1$.

If $F$ is a quadrilateral, the normals $n_1, n_2, n_3, n_4$ are the vertices of the polar polygon, whose sides all measure $\frac{\pi}{3}$. So the polar polygon is a spherical rhombus, which is symmetric with respect to the reflections in its two diagonals. In suitable coordinates $n_{1,3}=(0,\pm u,v)$ and $n_{2,4}=(\pm s,0,t)$, with $u,v,s,t>0$, $u^2+v^2=s^2+t^2=1$, and $vt=\langle n_1,n_2\rangle=\frac12$. Up to relabeling, $v\leq t$. Then $v^2\leq\frac12$, so $|n_1-n_3|=2u\geq\sqrt2>1$. If $v=t$, also $|n_2-n_4|>1$. If $v<t$, note that $x_3>0$ on $\Cone(F)\setminus\{0\}$ and that $V_2\cap V_4$ lies in the plane $\{x_1=0\}$. There
\[
\min\{\langle x,n_1\rangle,\langle x,n_3\rangle\}=vx_3-u|x_2|<tx_3=\langle x,n_2\rangle=\langle x,n_4\rangle\, ,
\]
so $V_2\cap V_4=\{0\}$.

\medskip

{\bf Step 4. The map $\Phi$.} Let $\Phi(x):=x$ for $x\in\Sigma\setminus B_{2e^L}$. For $x\in\Sigma\cap V_j\cap B_{2e^L}$ let $\Phi(x):=x+\tau(x)\,n_j$, where $\tau(x)$ is the first time at which $x+\tau n_j$ hits $\Sigma'\cup S$. Since $d(x+\tau n_j)=\tau$ as long as $x+\tau n_j\in V_j$, $\tau(x)$ is the minimum of the exit time from $V_j$, which is piecewise linear in $x$, and of the solution of $\tau=\psi(|x+\tau n_j|)$, which tends to $0$ as $|x|\to 2e^L$. So $\Phi$ is Lipschitz. The exit time vanishes on the half-lines through the vertices, $\Phi$ moves points by at most $2$, and $\Phi(x)\in V_j$: this gives (1)--(3) with $R:=2e^L+3$. Since $S\cap B_2=\emptyset$, we have $\Phi(\Sigma)\cap B_1=\Sigma'\cap B_1$, which is (4). Finally $\Phi$ maps $\Sigma\cap B_{2e^L}$ into $S\cup(\Sigma'\cap\overline\Omega)$, so by \eqref{e:area-identity} and \eqref{e:defect}
\[
\Ha^2\big(\Phi(\Sigma\cap B_R)\big)\le\Ha^2(\Sigma\cap B_R)+\frac{C}{L}-\sum_{i<j}\big(|n_i-n_j|-1\big)\,\Ha^2(\Sigma'_{ij}\cap\Omega)\, .
\]
By Step 3, and since $\Sigma'\cap B_1\subset\Omega$, the sum is at least $\eta(F)$. Property (5) follows choosing $L$ with $C/L<\delta$.

\medskip

{\bf Step 5. Properties of $\eta(F)$.} Let $W_j:=V_j\cap F$. The $W_j$ are convex spherical polygons which cover $F$ and have pairwise disjoint interiors. By Step 3 and \eqref{e:adjacent}, $\eta (F)\geq 0$ and $\eta(F)>0$ if and only if $W_i\cap W_j$ is a nondegenerate arc for some pair $i,j$ of non-adjacent sides.

If some $p\in F$ is equidistant from all the sides, then $p$ belongs to every $W_j$. Since $W_j$ is convex, it contains the triangle with vertex $p$ and base $\sigma_j$. These triangles tile $F$, so they coincide with the $W_j$, and two of them corresponding to non-adjacent sides meet only at $p$. Hence $\eta(F)=0$.

Conversely, assume $\eta(F)=0$. If $k\leq 3$ the incenter of $F$ (or any point of $\M(F)$, if $k=2$) is equidistant from all the sides, so assume $k\geq 4$. Since $W_j$ meets $W_i$ at most in a point when $\sigma_i$ and $\sigma_j$ are not adjacent, $\partial W_j$ consists of $\sigma_j$ and of the two arcs $W_j\cap W_{j\pm 1}$, which start at the endpoints of $\sigma_j$ and meet at a point $p_j$. The arc $W_j\cap W_{j+1}$ ends at $p_j$ as part of $\partial W_j$ and at $p_{j+1}$ as part of $\partial W_{j+1}$, hence $p_j=p_{j+1}$. So all the $p_j$ coincide, and this point is equidistant from all the sides. \qed

\section{Weakening the Almgren minimizing condition}\label{s:relaxed}

In this section we prove Proposition \ref{p:halfspace} and Corollary \ref{c:halfspace}. Proposition \ref{p:halfspace} is a particular case of the wedge relaxed condition \cite[Proposition 10.5]{DLG}; we include a direct proof, which is simpler in this case.

\begin{figure}
\centering
\begin{tikzpicture}[scale=0.72,
  net/.style={line width=1.1pt, line join=round, line cap=round},
  moved/.style={color=blue!65!black, line width=1.1pt, line join=round, line cap=round},
  ghost/.style={gray!70, line width=0.4pt, dash pattern=on 1pt off 1.5pt},
  circ/.style={gray, line width=0.4pt, dash pattern=on 2pt off 1.5pt},
  bdry/.style={gray, line width=0.4pt},
  half/.style={fill=gray!12},
  lab/.style={font=\small, inner sep=1.5pt},
  slab/.style={font=\footnotesize, inner sep=1pt},
  >={Stealth[length=3.2pt, width=2.4pt]}
]
\def\background{%
  \fill[half] (-3.1,0) rectangle (3.1,3.05);
  \draw[bdry] (-3.1,0) -- (3.1,0);
}
\def\ellpoint{\fill[red] (0,0) circle (2.1pt);}

\begin{scope}[xshift=0cm]
  \background
  \draw[circ] (0,0) circle (0.6);
  \draw[circ] (0,0) circle (1.2);
  \draw[net] (0,0) -- (0.776,2.898);
  \draw[net] (0,0) -- (-2.719,-1.268);
  \draw[net] (0,0) -- (2.719,-1.268);
  \draw[gray, line width=0.4pt] (0,0) -- (150:1.2);
  \node[slab, anchor=south east] at (150:1.2) {$2\rho$};
  \node[slab, anchor=south east] at (150:0.6) {$\rho$};
  \draw[->, line width=0.6pt] (-2.4,0) -- (-2.4,0.85);
  \node[lab, right] at (-2.4,0.55) {$v$};
  \node[lab] at (-1.9,2.4) {$H_+$};
  \node[lab] at (0,-0.27) {$\ell$};
  \ellpoint
  \node[lab] at (0,-3.0) {(a) $E$};
\end{scope}

\begin{scope}[xshift=6.6cm]
  \background
  \draw[ghost] (0,0) -- (-2.719,-1.268);
  \draw[ghost] (0,0) -- (2.719,-1.268);
  \draw[net] (0,0) -- (0.776,2.898);
  \draw[moved] (0,0) -- (0,-1.6);
  \draw[moved] (-2.719,-1.268) -- (0,-1.6) -- (2.719,-1.268);
  \ellpoint
  \node[lab] at (0,-3.0) {(b) $\Psi(E)$};
\end{scope}

\begin{scope}[xshift=13.2cm]
  \background
  \draw[circ] (0,0) circle (0.6);
  \draw[circ] (0,0) circle (1.2);
  \draw[ghost] (0.155,0.580) -- (0.776,2.898);
  \draw[ghost] (0,0) -- (0,-1.6);
  \draw[ghost] (-2.719,-1.268) -- (0,-1.6) -- (2.719,-1.268);
  \draw[bdry, dash pattern=on 0.8pt off 1.2pt] (-3.1,-0.6) -- (3.1,-0.6);
  \node[font=\scriptsize, inner sep=1pt, anchor=north east] at (3.08,-0.70) {$\langle x,v\rangle=-\rho$};
  \draw[net] (0,0) -- (0.155,0.580);
  \draw[net] (0,0) -- (-0.544,-0.254);
  \draw[net] (0,0) -- (0.544,-0.254);
  \draw[moved] (0.155,0.580) -- (0.168,0.578) -- (0.181,0.576) -- (0.194,0.574) -- (0.207,0.573) -- (0.220,0.571) -- (0.233,0.569) -- (0.246,0.568) -- (0.259,0.566) -- (0.272,0.564) -- (0.285,0.563) -- (0.298,0.561) -- (0.311,0.559) -- (0.776,2.298);
  \draw[moved] (-0.544,-0.254) -- (-0.540,-0.347) -- (-0.529,-0.445) -- (-0.510,-0.546) -- (-0.483,-0.650) -- (-0.449,-0.758) -- (-0.408,-0.869) -- (-0.359,-0.984) -- (-0.302,-1.102) -- (-0.238,-1.224) -- (-0.166,-1.349) -- (-0.087,-1.478) -- (0.000,-1.611) -- (0.000,-2.200) -- (-2.719,-1.868);
  \draw[moved] (0.544,-0.254) -- (0.540,-0.347) -- (0.529,-0.445) -- (0.510,-0.546) -- (0.483,-0.650) -- (0.449,-0.758) -- (0.408,-0.869) -- (0.359,-0.984) -- (0.302,-1.102) -- (0.238,-1.224) -- (0.166,-1.349) -- (0.087,-1.478) -- (0.000,-1.611) -- (0.000,-2.200) -- (2.719,-1.868);
  \draw[->, gray, line width=0.5pt] (-1.8,-1.42) -- (-1.8,-1.94);
  \node[slab, left] at (-1.82,-1.66) {$-\rho v$};
  \ellpoint
  \node[lab] at (0,-3.0) {(c) $G(E)$};
\end{scope}
\end{tikzpicture}
\caption{The construction in the proof of Proposition \ref{p:halfspace}, in a cross-section by a plane orthogonal to $\ell$ (the red dot) where $\chi=1$. (a) The set $E$, the half-space $H_+$ (shaded) and the circles $d=\rho$ and $d=2\rho$. (b) $\Psi$ fixes $E\cap H_+$, but the moved part (blue) touches $\ell$. (c) $G$ is the identity where $d\le\rho$ and equals $\Psi-\rho v$ where $d\ge 2\rho$ ($\Psi(E)$ is dotted), so the moved part of $G(E)$ (blue) stays away from $\ell$. In $H_+$ the map $G$ fixes the points at distance at most $\rho$ from $\ell$, and moves the other points by at most $\rho$ in the direction $-v$, keeping them at distance at least $\rho$ from $\ell$.}\label{f:halfspace}
\end{figure}

\begin{proof}[Proof of Proposition \ref{p:halfspace}]
By \cite[Proposition 10.1]{DLG} it suffices to define $G$ on $E$, with $G(x)=x$ on $E\setminus B_1$ and $\overline{W\cup G(W)}\cap \ell=\emptyset$.

Let $v$ be the unit vector such that $H_+=\{\langle x,v\rangle>0\}$, and note that $v\perp\ell$. Let $d(x):=\dist(x,\ell)$ and $L:=\Lip(\Psi)$. Since $\Psi$ is the identity on $\ell$, we have $|\Psi(x)-x|\le(L+1)d(x)$. Fix a small $\rho>0$ and let
\[
t(x):=\min\Big\{1,\tfrac{(d(x)-\rho)^+}{\rho}\Big\},\qquad \chi(x):=\min\Big\{1,\tfrac{(1-|x|)^+}{\rho}\Big\}\, .
\]
For $x\in E$ we define
\[
G(x):=x+t(x)\big(\Psi(x)-x-\rho\chi(x)v\big)\, .
\]
Away from a small set, $G$ is $\Psi$ followed by the translation by $-\rho v$ (see Figure \ref{f:halfspace}), which pushes the moved points away from $\ell$; the cutoffs $t$ and $\chi$ restore the identity near $\ell$ and near $\partial B_1$.
The Lipschitz constant of $G$ is bounded independently of $\rho$: where $\nabla t\neq0$ we have $d\le2\rho$ and so $|\Psi(x)-x-\rho\chi v|\le(2L+3)\rho$, while $|\nabla\chi|\le \rho^{-1}$ is multiplied by $\rho t\leq \rho$. Clearly $G(x)=x$ if $|x|\geq 1$ or $d(x)\le\rho$. Hence $\overline W$ is a compact subset of $\{|x|\le1,\ d(x)\ge\rho\}$ and $\overline{G(W)}\subset G(\overline W)$. So it suffices to show that $G(x)\notin\ell$ whenever $|x|\leq 1$ and $d(x)\geq\rho$:
\begin{itemize}
\item if $x\in H_+$, then $\Psi (x) = x$ and $G(x)=x-t\rho\chi v$, so $d(G(x))\ge d(x)-t\rho\ge\rho$, because $d(x)\geq\rho(1+t(x))$;
\item if $x\not\in H_+$, then $\Psi(x)\not\in H_+$ by (iii), and thus $\langle G(x),v\rangle\le-t\rho\chi$. If $t\chi>0$, then $G(x)\notin\ell$. Otherwise $G(x)=x$: this is clear if $t=0$, while if $\chi=0$ then $|x|=1$ and $\Psi(x)=x$.
\end{itemize}

As for the area, on $A_1:=\{x\in E:\ d(x)\geq 2\rho,\ |x|\leq 1-\rho\}$ the map $G$ is the translation of $\Psi$ by $-\rho v$, so $\Ha^2(G(A_1))\le\Ha^2(\Psi(E\cap\overline B_1))$. The remaining part $A_2:=(E\cap\overline B_1)\setminus A_1$ has area at most $C\rho$, and the Lipschitz constant of $G$ is bounded independently of $\rho$, hence $\Ha^2(G(A_2))\le C\rho$. Choosing $\rho$ small we get (c).
\end{proof}

\begin{proof}[Proof of Corollary \ref{c:halfspace}]
Let $G$ be given by Proposition \ref{p:halfspace}. By (a) and (b), $G$ is admissible in Definition \ref{d:Almgren-minimizing} with $V$ a large ball. Since $G(x)=x$ for $x\in E\setminus B_1$, minimality gives $\Ha^2(E\cap\overline B_1)\le\Ha^2(G(E\cap\overline B_1))\le\Ha^2(\Psi(E\cap\overline B_1))+\eps$. Let $\eps\downarrow 0$.
\end{proof}

\section{Faces with angles \texorpdfstring{$\frac{2\pi}{3}$}{2pi/3}}\label{s:faces}

In this section we prove Proposition \ref{t:faces}, combining Propositions \ref{p:halfspace} and \ref{p:convex-face}.

\begin{proof}[Proof of Proposition \ref{t:faces}]
Let $\N := E\cap \partial B_1$ and let $F$ be an equiangular face of $\N$. $F$ is weakly locally convex, hence convex by \cite[Lemma B.1]{DLG}, and its boundary is a Jordan curve. So $F$ is a convex spherical polygon with all angles $\frac{2\pi}{3}$, and by Gauss--Bonnet it has $k\in\{2,3,4,5\}$ vertices. Assume by contradiction $k\in\{4,5\}$ and let $\Sigma$ and $\Sigma'$ be as in Proposition \ref{p:convex-face}.

\medskip

{\bf Step 1. A half-space.} The pins belong to $\N$, so $\ell$ does not meet the interior of $\Cone(F)$, which is an open convex set. By the Hahn--Banach theorem there are a nonzero linear function $f$ and $c\in\R$ with $f\le c$ on $\ell$ and $f>c$ on the interior of $\Cone(F)$. Since $\ell$ is a line, $f=0$ on $\ell$ and $c\ge0$. Hence $H_+:=\{f<0\}$ is an open half-space with $\ell\subset\partial H_+$ and $\Cone(F)\cap H_+=\emptyset$.

\medskip

{\bf Step 2. The deformation.} For $\delta>0$ let $R$ and $\Phi_\delta$ be given by Proposition \ref{p:convex-face}, and extend $\Phi_\delta$ to $E$ setting $\Phi_\delta (x)=x$ on $E\setminus\Sigma$. Since $F$ is a face, $E\cap \Cone(F)=\Sigma$. The extension is Lipschitz, because $\Sigma$ meets $\overline{E\setminus \Sigma}$ only along the half-lines through the vertices of $F$, where $\Phi_\delta$ is the identity, and the edges of $\N$ meet at positive angles. Recall that $\N$ is connected \cite[Proposition 5.1(ii)]{DLG}. After the rescaling $x\mapsto R^{-1}\Phi_\delta(Rx)$, the map satisfies the assumptions of Proposition \ref{p:halfspace}: points of $\ell$ are fixed by property (1), and both the set of moved points and its image are contained in $\Cone(F)$, which does not intersect $H_+$.

\medskip

{\bf Step 3. The case $\eta(F)>0$.} Choose $\delta:=\frac{1}{2}\eta(F)$. By property (5),
\[
\Ha^2\big(\Phi_\delta(E\cap B_R)\big)\le\Ha^2(E\cap B_R)-\tfrac12\eta(F)\, ,
\]
which contradicts Corollary \ref{c:halfspace}.

\medskip

{\bf Step 4. The case $\eta(F)=0$.} By Proposition \ref{p:convex-face} and Step 5 of its proof, $\M(F)$ consists of $k$ arcs joining a point $p_0\in F$ to the vertices of $F$. Hence, in a small ball $B\subset B_1$ centered at $q:=\frac12 p_0$ and contained in the interior of $\Cone(F)$, the set $\Sigma'$ consists of $k\geq4$ half-planes bounded by the line $\R p_0$, two of which form an angle smaller than $\frac{2\pi}3$. Such a cone is not minimizing in any ball centered at its vertex \cite[Lemma 4.7 and the subsequent discussion]{DLG}: there are $\eps_1>0$ and a Lipschitz map $\Lambda$ which is the identity outside $B$, maps $B$ into itself, and decreases the area of $\Sigma'\cap B$ by $\eps_1$. Choose $\delta<\eps_1$. By property (4), $\Phi_\delta(E\cap B_R)\cap B=\Sigma'\cap B$, so by property (5)
\[
\Ha^2\big(\Lambda(\Phi_\delta(E\cap B_R))\big)\le\Ha^2(E\cap B_R)+\delta-\eps_1<\Ha^2(E\cap B_R)\, .
\]
Since $\Lambda$ only moves points inside $B\subset\Cone(F)$, the map $\Lambda\circ\Phi_\delta$ satisfies the assumptions of Proposition \ref{p:halfspace} as in Step 2, and this contradicts Corollary \ref{c:halfspace}.
\end{proof}

\section{Classification of nets without equiangular quadrilaterals and pentagons}\label{s:classification}

Throughout this section $\N$ is a stationary net with two antipodal pins. We use the following facts from \cite{DLG}:
\begin{itemize}
\item $\N$ is connected, and every face is an open disk \cite[Proposition 5.1(ii)]{DLG};
\item if $\N$ has at least three vertices, no face has both pins on its boundary \cite[Proposition 5.1(iv)]{DLG};
\item an equiangular face is convex \cite[Lemma B.1]{DLG}. In particular it lies on one side of each edge of its boundary, and its boundary visits each of its vertices once.
\end{itemize}
Since an edge ending at a pin $P$ cannot contain $-P$ in its interior, if $\N$ has at least three vertices every edge ending at a pin is shorter than $\pi$. An equiangular biangle is bounded by two half-equators joining two antipodal points, and an equiangular triangle is equilateral with side $\lambda_T$ \cite[Proposition 6.8]{DLG}.

\subsection{Medial axis reduction}

\begin{lemma}[Medial axis reduction]\label{l:reduction}
Let $\N$ be a stationary net with two antipodal pins such that every equiangular face of $\N$ is a biangle or a triangle. Let $T$ be an equiangular face of $\N$ which is a biangle or a triangle and none of whose vertices is a pin. Let $\tilde{\N}$ be the net obtained by replacing the sides of $T$ with its medial axis. Then:
\begin{itemize}
\item[(1)] $\tilde{\N}$ is a stationary net with the same two antipodal pins, and it has two vertices fewer than $\N$;
\item[(2)] every equiangular face of $\tilde{\N}$ is a biangle or a triangle.
\end{itemize}
\end{lemma}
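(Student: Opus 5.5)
We first record what the medial axis looks like. If $T$ is an equiangular biangle, it is bounded by two half-equators $\gamma_1,\gamma_2$ joining antipodal vertices $A,-A$ and forming an angle $\frac{2\pi}{3}$. Its medial axis is the half-equator bisecting it. If $T$ is an equiangular triangle, it is equilateral with side $\lambda_T$, and its medial axis consists of three arcs joining the incenter $O$ to the vertices. Each such arc bisects the angle $\frac{2\pi}{3}$ of $T$, so it makes angle $\frac{\pi}{3}$ with the two sides of $T$ at that vertex. At $O$ the three arcs meet at angles $\frac{2\pi}{3}$ by symmetry.

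\textbf{Step 1: stationarity of $\tilde\N$.} Fix a vertex $V$ of $T$. Since $V$ is not a pin, it has degree $3$, with two edges on $\partial T$ and a third edge $e_V$ outside $T$. After the replacement, $V$ has degree $2$: the edge $e_V$ and the medial arc. These make angle $\frac{2\pi}{3}+\frac{\pi}{3}=\pi$, so the two arcs continue each other geodesically. Hence $V$ is no longer a vertex, and $e_V$ merges with the medial arc into one geodesic arc.

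\emph{Biangle.} Both vertices disappear, and the two outer edges $e_A,e_{-A}$ together with the bisecting half-equator form a single geodesic arc. The count of vertices drops by two. We must check that this arc is simple and meets $\N$ only at its endpoints. The interior of the new arc lies in $\overline T\cup e_A\cup e_{-A}$, and $T$ is a face, so it crosses no other edge. Its two ends could coincide or overlap only if $e_A$ and $e_{-A}$ met. One also has to exclude that the merged arc has length $\ge 2\pi$, or that it closes up without vertices. If some degenerate configuration arises, I would handle it by noting that $\N$ then has very few vertices, and checking those cases directly. The pins are untouched, so their degrees and angles are unchanged.

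\emph{Triangle.} The three vertices disappear and the new vertex $O$ appears, of degree $3$ with angles $\frac{2\pi}{3}$. The net change is $-2$ vertices. All other vertices and angles are untouched. This proves (1).

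\textbf{Step 2: equiangular faces of $\tilde\N$ (the main obstacle).} The faces of $\tilde\N$ not adjacent to $T$ are unchanged. For a face $F$ of $\N$ adjacent to $T$ along a side $\sigma$, the new face $\tilde F$ is obtained by removing the two endpoints of $\sigma$ from the vertex list of $F$. The angles $\frac{2\pi}{3}$ there become straight angles. Those endpoints had angle $\frac{2\pi}{3}$ in $F$.

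In the triangle case, $\tilde F$ is $F$ with $\sigma$ replaced by the two medial arcs through $O$. It gains the vertex $O$, with angle $\frac{2\pi}{3}$, and loses the two endpoints of $\sigma$. In the biangle case, the faces $F_1,F_2$ adjacent along $\gamma_1,\gamma_2$ each lose both vertices $\pm A$; in addition, two of the faces around $T$ may merge.

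So the angle multiset of $\tilde F$ is that of $F$ with either one copy of $\frac{2\pi}{3}$ deleted (triangle case) or two copies deleted (biangle case). Hence $\tilde F$ is equiangular iff $F$ is, and $\tilde F$ then has one or two fewer vertices. If $F$ is equiangular, it is a biangle or a triangle by hypothesis, so $\tilde F$ has at most $2$ vertices, and nothing larger is created. The genuinely delicate points are the following.
\begin{itemize}
\item The same face $F$ may touch $T$ along several sides, so several reductions apply to one face. Each one still only deletes angles, so the argument carries over.
\item The biangle case may produce faces with $0$ or $1$ vertices. These are then not biangles or triangles, which would break (2).
\end{itemize}
I would rule out the second problem using the fact that faces are open disks (\cite[Proposition 5.1(ii)]{DLG}) and that no face touches both pins. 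A face with at most one vertex on $\partial$ is bounded by a closed geodesic or a geodesic loop. Such a face occurs only in the excluded small configurations, which are checked by hand.

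Since new faces have no more vertices and all their angles equal $\frac{2\pi}{3}$ only if the old face was equiangular, we get (2). This would complete the proof.
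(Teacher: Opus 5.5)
Your outline is right: at each vertex of $T$ the edge $e_V$ and the medial arc form an angle $\pi$. Your angle bookkeeping also correctly shows that an equiangular face $\tilde{F}$ of $\tilde{\N}$ comes from an equiangular face $F$ of $\N$ with fewer vertices. \textbf{There is, however, a genuine gap}: the points where the lemma has content are deferred to ``degenerate configurations \dots\ checked by hand''.

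\textbf{Part (1), biangle case.} With vertices $\pm U$, you must show that $e_U\cup\mu\cup e_{-U}$ is an embedded arc with two distinct endpoints; otherwise $\tilde{\N}$ is not a geodesic net. Since $e_U$ and $e_{-U}$ lie on the half of the great circle of $\mu$ complementary to $\mu$, this can fail in only two ways, and each needs its own reason.
\begin{itemize}
\item If $e_U=e_{-U}$, then $\pm U$ and the three edges joining them form a connected component of $\N$ containing no pin. This contradicts the connectedness of $\N$.
\item If $e_U$ and $e_{-U}$ share an endpoint $W$, they form an angle $\pi$ at $W$. This is impossible at a non-pin vertex or at a pin of degree $3$, so $W$ is a pin. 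Then the other pin $-W$ lies in the interior of $\mu$, i.e.\ inside the face $T$, which is absurd.
\end{itemize}
``$\N$ has very few vertices'' is not a substitute. What is actually used is the connectedness of $\N$ and the antipodality of the pins.

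\textbf{Part (2).} Your bookkeeping only bounds the number of vertices of $\tilde{F}$ from above; you also need it to be at least two. You raise this only for biangles, but it arises for triangles too: if the equiangular face $F$ across a side $\sigma$ of a triangle $T$ were a biangle, $\tilde{F}$ would have a single vertex. The missing observation is the side length.
\begin{itemize}
\item If $T$ is a triangle, $|\sigma|=\lambda_T\neq\pi$. Equiangular biangles have sides of length $\pi$, so $F$ is a triangle and $\tilde{F}$ is a biangle.
\item If $T$ is a biangle, $|\sigma|=\pi\neq\lambda_T$, so $F$ is a biangle with vertices $\pm U$. Its second side must be $e_U=e_{-U}$, which was excluded in (1).
\end{itemize}
Multiple contacts need no separate treatment. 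Once $F$ is equiangular it is convex and its boundary visits each vertex once, so it meets $T$ along exactly one side. With these points supplied, your argument becomes essentially the paper's.
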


\begin{figure}
\centering
\begin{tikzpicture}[scale=1.1]
\def\rT{1.3}\def\lT{1.0}
\begin{scope}
\coordinate (O) at (0,0);
\foreach \i/\a in {1/210,2/330,3/90}{
  \coordinate (A\i) at (\a:\rT);
  \coordinate (B\i) at (\a:\rT+\lT);
}
\draw[thick] (A1) to[bend right=30] (A2) to[bend right=30] (A3) to[bend right=30] (A1);
\foreach \i in {1,2,3}{\draw[thick,dashed] (A\i) -- (B\i); \fill (A\i) circle (1.6pt);}
\node at (0,0) {$T$};
\node[below=7pt,xshift=4pt] at (A1) {$A_1$};
\node[below=7pt,xshift=-4pt] at (A2) {$A_2$};
\node[right=2pt] at (A3) {$A_3$};
\end{scope}
\draw[->,gray] (2.6,0.2) -- (3.6,0.2);
\begin{scope}[xshift=6.2cm]
\coordinate (O) at (0,0);
\foreach \i/\a in {1/210,2/330,3/90}{
  \coordinate (A\i) at (\a:\rT);
  \coordinate (B\i) at (\a:\rT+\lT);
}
\foreach \i in {1,2,3}{\draw[thick,blue!70!black] (O) -- (A\i); \draw[thick,dashed] (A\i) -- (B\i);}
\fill (O) circle (1.6pt);
\node[below=2pt] at (O) {$O$};
\node[below=7pt,xshift=4pt] at (A1) {$A_1$};
\node[below=7pt,xshift=-4pt] at (A2) {$A_2$};
\node[right=2pt] at (A3) {$A_3$};
\end{scope}
\end{tikzpicture}
\caption{The medial axis reduction of Lemma \ref{l:reduction} when $T$ is a triangle. Left: $T$ and the other edges at its vertices (dashed). Right: the sides of $T$ are replaced by its medial axis, namely the three arcs joining the vertices to the center $O$, which extend the dashed edges.}\label{f:reduction}
\end{figure}

\begin{proof}
For a vertex $V$ of $T$, let $e_V$ be the edge of $\N$ at $V$ which is not a side of $T$. It forms angles $\frac{2\pi}3$ with both sides, so it extends the arc of the medial axis ending at $V$ (see Figure \ref{f:reduction}).

For (1):
\begin{itemize}
\item if $T$ is a triangle, its medial axis consists of three arcs from its center $O$, forming angles $\frac{2\pi}3$ at $O$, and (1) is clear;
\item if $T$ is a biangle with vertices $\pm U$, its medial axis is the half-equator $\mu$ bisecting $T$, and we only need that $e_U\cup\mu\cup e_{-U}$ is embedded. Otherwise either $e_U=e_{-U}$, and then $\pm U$ and the three edges joining them form a component of $\N$ without pins; or $e_U$ and $e_{-U}$ meet at a point $W$ forming an angle $\pi$, so $W$ is a pin and $-W$ lies inside $T$.
\end{itemize}

For (2), let $\tilde F$ be an equiangular face of $\tilde\N$ which meets $T$. Since $\tilde F$ is convex, it consists of a piece of $T$, a side $\sigma$ of $T$, and the face $F$ of $\N$ on the other side of $\sigma$. The angles of $F$ away from the endpoints of $\sigma$ are angles of $\tilde F$, so $F$ is equiangular, hence a biangle or a triangle:
\begin{itemize}
\item if $T$ is a triangle, then $|\sigma|=\lambda_T\neq\pi$, so $F$ is a triangle and $\tilde F$ is a biangle;
\item if $T$ is a biangle, then $|\sigma|=\pi$, so $F$ is a biangle with vertices $\pm U$ and second side $e_U=e_{-U}$, which we have excluded.\qedhere
\end{itemize}
\end{proof}

\subsection{Balancing and four-cycles} For a vertex $V$ of a geodesic net, let $S(V)$ be the sum of the unit tangent vectors at $V$ of the edges ending at $V$. If the net is stationary and $V$ is not a pin, then $S(V)=0$.

\begin{lemma}[Balancing]\label{l:balancing}
If $\N$ is a stationary net with two antipodal pins $P$ and $-P$, then $S(P)=S(-P)$.
\end{lemma}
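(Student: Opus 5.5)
Fix the pins $P$ and $-P$. The plan is to apply the first variation formula for geodesic nets on $\Sf$ to the infinitesimal rotations about axes orthogonal to $P$. Each such field is Killing, so its first variation vanishes identically, and only the boundary terms at the vertices survive.

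Concretely, for a fixed vector $a\in\R^3$ let $Y_a(x):=a\times x$, a tangent vector field on $\Sf$ generating rotations. The flow of $Y_a$ consists of isometries, so the total length of $\N$ is invariant and the first variation of length along $Y_a$ vanishes. On the other hand, each edge is a geodesic arc. For an edge $\gamma$ with endpoints $A,B$ and unit tangents $\tau_A,\tau_B$ at $A,B$ pointing into $\gamma$, the first variation of its length is $-\langle Y_a(A),\tau_A\rangle-\langle Y_a(B),\tau_B\rangle$. There is no interior term, since the geodesic curvature vanishes. This can also be seen directly: for the geodesic $\gamma$, the function $s\mapsto\langle Y_a(\gamma(s)),\gamma'(s)\rangle=\langle a,\gamma\times\gamma'\rangle$ is constant, because $\gamma\times\gamma'$ is the constant normal to the plane of $\gamma$. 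Summing over edges and regrouping by vertices gives
\[
0=\sum_V\langle Y_a(V),S(V)\rangle=\langle Y_a(P),S(P)\rangle+\langle Y_a(-P),S(-P)\rangle,
\]
because $S(V)=0$ at non-pin vertices. Since $Y_a(-P)=-Y_a(P)$, we get $\langle a\times P,\,S(P)-S(-P)\rangle=0$ for all $a$.

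As $a$ ranges over $\R^3$, the vectors $a\times P$ span $P^\perp=T_P\Sf$. The difference $S(P)-S(-P)$ lies in $P^\perp$, because $T_{-P}\Sf=P^\perp$ as well. Hence $S(P)=S(-P)$, as vectors in $\R^3$ tangent to both antipodal points.

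The only delicate point is bookkeeping. I would verify the formula $\frac{d}{dt}\mathrm{length}=-\sum\langle Y,\tau\rangle$ with the inward tangent convention, or simply use the constancy of $\langle a,\gamma\times\gamma'\rangle$ and evaluate at the two endpoints, since this avoids any variation argument. I would also handle edges of length $\pi$ between the pins (the nets of two vertices), where the same computation holds verbatim. I expect no real obstacle.
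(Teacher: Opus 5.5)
Your proposal is correct and is essentially the paper's argument. The paper uses the constancy of $\gamma\times\gamma'$ along each edge to get $\sum_V V\times S(V)=0$, which is your Killing-field identity $\sum_V\langle a\times V,S(V)\rangle=\langle a,\sum_V V\times S(V)\rangle=0$ for all $a$. It then concludes from $P\times\big(S(P)-S(-P)\big)=0$ and $S(\pm P)\perp P$, exactly as you do.
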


\begin{proof}
If $\gamma$ is an edge parametrized by arclength, then $\gamma\times\gamma'$ is constant. Hence, if $a$ and $b$ are the endpoints of $\gamma$ and $\tau_a$, $\tau_b$ are the unit tangent vectors of $\gamma$ at $a$ and $b$ pointing into $\gamma$, we have $a\times\tau_a+b\times\tau_b=0$. Summing over all the edges we get $\sum_V V\times S(V)=0$, that is $P\times\big(S(P)-S(-P)\big)=0$. Since $S(P)$ and $S(-P)$ are orthogonal to $P$, they coincide.
\end{proof}

\begin{lemma}[Biangles]\label{l:lune}
Let $L\subset\Sf$ be a closed biangle with vertices $\pm P$ and angle smaller than $\pi$, i.e.\ the closed region bounded by two half great circles joining $P$ and $-P$ and forming an angle smaller than $\pi$, and let $\N$ be a geodesic net whose vertices lie in $L$.
\begin{itemize}
\item[(i)] Let $V\neq\pm P$ be a vertex of $\N$ on $\partial L$ where three edges meet with angles $\frac{2\pi}{3}$, one of which is contained in $\partial L$. Then $V$ is an endpoint of an edge longer than $\pi$.
\item[(ii)] At most one edge of $\N$ which does not end at $\pm P$ is longer than $\pi$.
\end{itemize}
\end{lemma}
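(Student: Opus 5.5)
The plan is to prove (i) by following the edge that leaves $L$, and (ii) by looking at where two long great-circle arcs must cross.

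Throughout I would use one elementary fact about great-circle arcs. An arc of length $>\pi$ misses only an open arc of its great circle of length $<\pi$, which I call its \emph{gap}. The gap joins the two endpoints of the edge.

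\textbf{Part (i).} Let $C\subset\partial L$ be the half great circle containing $V$, and let $G\supset C$ be its great circle. Since the angle of $L$ is smaller than $\pi$ (and positive), $L$ lies in one closed hemisphere $\overline{H}$ bounded by $G$, and $L\cap G=C$.
\begin{itemize}
\item At $V$ one edge is tangent to $C$. The other two edges leave at angles $\pm\frac{2\pi}{3}$ from it, so they point into opposite sides of $G$. Let $e$ be the one pointing into the open hemisphere $\Sf\setminus\overline H$.
\item A great-circle arc starting at $V\in G$ transversally to $G$ meets $G$ again only at $-V$, after length exactly $\pi$. Hence $e$ stays outside $\overline H\supset L$ for all times in $(0,\pi)$.
\item The other endpoint of $e$ is a vertex, so it lies in $L$. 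Therefore $|e|\ge\pi$. If $|e|=\pi$, that endpoint is $-V\in G\cap L=C$. This is impossible, since $-V\in C$ would force $V=\pm P$.
\end{itemize}
So $|e|>\pi$.

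\textbf{Part (ii), setup.} Suppose $e_1,e_2$ are two such edges, both longer than $\pi$ and not ending at $\pm P$, lying on great circles $G_1,G_2$.
\begin{itemize}
\item If $G_1=G_2$, two arcs of length $>\pi$ on one circle overlap in a set of positive length. This contradicts the fact that edges meet only at endpoints.
\item Otherwise $G_1\cap G_2=\{q,-q\}$. Each gap has length $<\pi$, so it contains at most one of $\pm q$. Hence each $e_i$ contains at least one of $\pm q$.
\item Edges may share only endpoints, and an arc of length $>\pi$ having $q$ as an endpoint also contains $-q$ in its interior. Checking the cases, the only configuration not immediately contradictory is: $e_1$ contains $q$ but not $-q$, and $e_2$ contains $-q$ but not $q$. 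Thus $-q$ lies in the gap of $e_1$ and $q$ lies in the gap of $e_2$, both in the open gaps.
\end{itemize}

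\textbf{Part (ii), the key step.} Use that $L$ is geodesically convex, since its angle is less than $\pi$. Each gap is the short geodesic (length $<\pi$) between two endpoints lying in $L$, so it is contained in $L$. Hence $q,-q\in L$. Since $L$ contains no antipodal pair other than $\pm P$, we get $q=\pm P$, say $q=-P$ after relabeling.
\begin{itemize}
\item Then $G_1$ passes through $\pm P$. Its open gap contains $-q=P$ in its interior, because the endpoints of $e_1$ are not $\pm P$.
\item Near $P$ the great circle $G_1$ meets $L$ only along one half-meridian: the opposite half-meridian of $G_1$ lies in $L$ only if the angle of $L$ equals $\pi$.
\item So a gap containing $P$ in its interior cannot be contained in $L$, a contradiction.
\end{itemize}

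The main obstacle is this endpoint and antipodal bookkeeping in (ii). I expect to spend most care making sure the case analysis on which of $\pm q$ lie on which arc, and whether they are endpoints, is exhaustive.
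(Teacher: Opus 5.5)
Your proof is correct. Part (i) is essentially the paper's argument. The paper says only ``by convexity of $L$''; your closed hemisphere bounded by the great circle of the side through $V$ makes that step explicit.

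In (ii) your route differs from the paper's, which is shorter. The paper fixes an open hemisphere $H$ with $L\setminus\{\pm P\}\subset H$. Since the edges do not end at $\pm P$, their endpoints lie in $H$. Hence each gap (the short arc joining the endpoints) lies in $H$, and each long edge contains the entire closed half of its great circle lying in $\Sf\setminus H$. Two such closed half great circles always meet: their great circles cross at $\pm q$, and at least one of these lies outside $H$. That common point lies outside $H$, so it is not a vertex, which is the contradiction.

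Working with $H$ instead of $L$ pays off because $H$ contains no antipodal pair at all. So your case analysis on which of $\pm q$ lies on which edge never arises. Nor does the exceptional configuration $\{q,-q\}=\{\pm P\}$, which you dispose of with the local picture of $G_1\cap L$ near $P$. Your version uses only the convexity of $L$ itself and is valid, just longer.
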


\begin{proof}
The set $L\setminus\{\pm P\}$ is contained in an open hemisphere $H$, so it contains no pair of antipodal points.

(i) One of the edges at $V$ leaves $V$ towards the exterior of $L$, and its other endpoint lies in $L$. Since $L$ is convex, this edge is longer than $\pi$.

(ii) An edge longer than $\pi$ with endpoints in $H$ contains the closed half of its great circle which lies outside $H$. Two such closed half great circles intersect, because two great circles meet at two antipodal points, one of which lies outside $H$. Since the intersection point is not a vertex, two such edges cannot exist.
\end{proof}

In \cite[Section 14.5]{DLG} the following lemma is proved with the help of a computer (cf. \cite[Lemma 14.2]{DLG}). We give here a proof without computer assistance.

\begin{lemma}\label{l:four-cycle}
Let $\N$ be a stationary net with two antipodal pins $P$ and $-P$ of degree $2$, and assume that removing the pins and the edges ending at them we obtain a cycle $\N_0$ with four vertices, each of which is joined to exactly one pin. Then $\N$ is the tetrahedral net, with the pins at the midpoints of two opposite edges.
\end{lemma}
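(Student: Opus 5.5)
The plan has three parts: pin down the combinatorics, rule out one of the two possible attachments of the pins, and then show that balancing forces the tetrahedral configuration.

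\textbf{Combinatorics.} Write the cycle as $\N_0=A_1A_2A_3A_4$. Each $A_i$ has degree $3$: two cycle edges and one pin edge. Since $P$ and $-P$ have degree $2$, exactly two of the $A_i$ are joined to $P$ and two to $-P$. So $\N$ has $6$ vertices and $8$ edges, and by Euler's formula it has $4$ faces.

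The closed curve $\N_0$ separates the sphere. A pin edge can cross neither $\N_0$ nor another edge, so the two edges at $P$ lie on one side of $\N_0$. Moreover, $P$ and $-P$ lie on opposite sides of $\N_0$. Indeed, if both pins were on the same side, the two pin paths of length two would both lie in a disk bounded by $\N_0$. Then some face would be bounded only by cycle edges, or some face would have both pins on its boundary, and the latter is excluded by \cite[Proposition 5.1(iv)]{DLG}.

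\textbf{Which vertices are joined to which pin.} There are two cases.

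Suppose first that $P$ is joined to two \emph{adjacent} vertices, say $A_1,A_2$. Then $-P$ is joined to $A_3,A_4$. The side of $\N_0$ containing $P$ is split into a triangle $PA_1A_2$ and a pentagon $PA_2A_3A_4A_1$. I will use the angle conditions together with the balancing identity of Lemma \ref{l:balancing} to show this is impossible. The argument runs as follows. At $P$ the two angles are $\alpha$ and $2\pi-\alpha$, with $\alpha\ge\frac{2\pi}3$, and $|S(P)|=2|\cos(\alpha/2)|$. Similarly at $-P$ the angles are $\beta$ and $2\pi-\beta$ with $|S(-P)|=2|\cos(\beta/2)|$. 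Balancing gives $S(P)=S(-P)$ as vectors in $\R^3$. Two conclusions follow:
\begin{itemize}
\item[(i)] $\beta\in\{\alpha,2\pi-\alpha\}$;
\item[(ii)] the bisector direction at $-P$ is the same vector as the bisector direction at $P$.
\end{itemize}
Combining this with Gauss--Bonnet applied to the four faces, I expect to get a contradiction. Specifically, the triangles $PA_1A_2$ and $-PA_3A_4$ have angles $\frac{2\pi}{3},\frac{2\pi}{3}$ at their $A$-vertices, and the areas of all four faces are controlled by Gauss--Bonnet. The contradiction should come either from forcing an equiangular pentagon or quadrilateral, or from producing two edges longer than $\pi$, which Lemma \ref{l:lune}(ii) forbids. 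Note also that every pin edge is shorter than $\pi$.

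Suppose next that $P$ is joined to \emph{opposite} vertices, say $A_1,A_3$, and $-P$ to $A_2,A_4$. Then all four faces are quadrilaterals $\pm P A_i A_{i+1} A_{i+2}$. Each has angles $\frac{2\pi}3$ at its three non-pin vertices, and the angles at the pins are $\alpha,2\pi-\alpha$ at $P$ and $\beta,2\pi-\beta$ at $-P$. This is the tetrahedral combinatorics, and the remaining task is to show $\alpha=\beta=\pi$. In that case the pins sit in the middle of straight edges $A_1A_3$ and $A_2A_4$. Since the four non-pin vertices are stationary with degree $3$, removing the pins yields a stationary net with four trivalent vertices, which is the tetrahedral net by the classical classification of such nets. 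The pins are then the midpoints because they are antipodal.

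\textbf{Main obstacle: proving $\alpha=\pi$.} My first attempt is a symmetry argument. Balancing gives $S(P)=S(-P)$. I want to show that the rotation by $\pi$ about the axis spanned by $S(P)$, or a reflection fixing $\pm P$, maps the net to itself. Then the configuration depends on a single parameter, namely the common length of the pin edges, which is determined by $\alpha$.

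To set this up, parametrize the vertices directly. Put
\[
A_1,A_3=\cos a\,P+\sin a\,u_{1,3}, \qquad A_2,A_4=-\cos b\,P+\sin b\,w_{2,4},
\]
where $u_1,u_3$ are the unit tangent directions at $P$ and $w_2,w_4$ those at $-P$, so that $u_1+u_3=w_2+w_4$ by balancing. Impose the $\frac{2\pi}3$ conditions at the $A_i$. These say that the tangent of each cycle edge at $A_i$ is the rotation by $\pm\frac{2\pi}3$ of the pin-edge tangent, so each cycle edge lies on an explicitly determined great circle. The closing conditions of the cycle then become a few trigonometric equations.

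From the equation that the great circle leaving $A_1$ and the one leaving $A_2$ must meet at the same point, I expect to deduce $a=b$ and $\alpha=\pi$. If this direct computation proves unwieldy, the fallback is Gauss--Bonnet: the four quadrilaterals have areas $\alpha,2\pi-\alpha,\beta,2\pi-\beta$, each minus $\pi$ corrected accordingly. Combine this with the area monotonicity of a quadrilateral with three fixed angles $\frac{2\pi}3$ as a function of its sides to force $\alpha=2\pi-\alpha$.
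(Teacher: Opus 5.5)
\textbf{There is a genuine gap.} It sits exactly at what you call the main obstacle: you never prove that the pins are flat. In the opposite-attachment case the symmetry argument, the trigonometric computation and the Gauss--Bonnet fallback are only announced. In the adjacent-attachment case you only say that a contradiction ``should come'' from somewhere. Gauss--Bonnet cannot supply it by itself, because in both cases the face areas are affine functions of the pin angles that add up to $4\pi$ identically. Nor would an equiangular quadrilateral or pentagon be a contradiction, since the hypotheses of this lemma do not exclude such faces. (For the same reason, in your combinatorial paragraph a face bounded only by cycle edges is not contradictory. What actually excludes both pins on one side is that they would then be joined to adjacent pairs, producing a face with both pins on its boundary. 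This point is not needed anyway.)

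\textbf{The missing step uses only tools you already invoke.} Suppose $S(P)\neq0$.
\begin{enumerate}
\item Two unit vectors in $P^\perp$ are determined, up to order, by their nonzero sum. So Lemma \ref{l:balancing} says that the pin edges at $-P$ leave $-P$ with the same unit tangents $v_1,v_2$ as the pin edges at $P$. Your (i) and (ii) amount to exactly this, but you do not exploit it.
\item The half great circle leaving $P$ with tangent $v_i$ also leaves $-P$ with tangent $v_i$. Hence all four pin edges lie on the two sides of one closed biangle $L$ with vertices $\pm P$ and angle $<\pi$.
\item Pin edges are shorter than $\pi$, so every vertex of $\N_0$ lies in the interior of a side of $L$, and all vertices of $\N$ lie in $L$.
\item By Lemma \ref{l:lune}(i), each of the four vertices of $\N_0$ is an endpoint of an edge longer than $\pi$. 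Such an edge must be a cycle edge, so there are at least two of them, contradicting Lemma \ref{l:lune}(ii).
\end{enumerate}
This argument never uses which vertices are joined to which pin, so your case distinction is superfluous.

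Once both pins are flat, erase them. In the adjacent case, two vertices $U,V$ become joined by two edges. This forces $V=-U$: otherwise both edges would lie on the great circle through $U$ and $V$ and form an angle $\pi$ at $U$. The third edges at $U$ and $-U$ then both lie on the open half great circle $\mu$ bisecting the complement of the biangle formed by those two edges. So the two remaining vertices lie on $\mu$ and are joined by two edges, hence are antipodal, which is impossible on $\mu$. The classical classification you cite would also dispose of this case once flatness is known. In the opposite case you get $K_4$ with four equiangular, hence equilateral, triangles, i.e.\ the tetrahedral net, and you can finish as you indicate.
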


\begin{proof}

\medskip

{\bf Step 1. The pins are flat.} Assume by contradiction that $S(P)\neq0$, i.e. that the two edges at $P$ form an angle $\theta<\pi$. Two unit vectors orthogonal to $P$ are determined, up to order, by their sum. Hence, by Lemma \ref{l:balancing}, the edges at $-P$ have the same unit tangent vectors $v_1,v_2$ as the edges at $P$. The half great circle leaving $P$ with tangent $v_i$ also leaves $-P$ with tangent $v_i$. So the four edges ending at the pins lie on the two sides of a closed biangle $L$ with vertices $\pm P$ and angle $\theta$. These edges are shorter than $\pi$, so the four vertices of $\N_0$ lie in the interior of the sides of $L$. By Lemma \ref{l:lune}(i) each of them is an endpoint of an edge longer than $\pi$, which does not end at a pin. So there are at least two such edges, contradicting Lemma \ref{l:lune}(ii).

\medskip

{\bf Step 2. Conclusion.} By Step 1, at each pin the two edges form a single geodesic arc. Erasing the pins we obtain a geodesic net $\N'$ with four vertices, all of degree $3$ with angles $\frac{2\pi}{3}$, and six edges. $\N'$ has no loops, and two vertices $U$ and $V$ joined by two edges are antipodal: otherwise the two edges would lie on the great circle through $U$ and $V$ and form an angle $\pi$ at $U$. Assume that $U$ and $-U$ are joined by two edges. These are half great circles forming an angle $\frac{2\pi}3$, so the third edges at $U$ and at $-U$ lie on the same open half great circle $\mu$ joining $U$ and $-U$. Their other endpoints are the two remaining vertices $W_1\neq W_2$, and counting degrees $W_1$ and $W_2$ are joined by two edges. So they are antipodal, which is impossible because they lie on $\mu$. Hence $\N'$ is the complete graph on four vertices, and its faces are four triangles with angles $\frac{2\pi}{3}$. These are equilateral with side $\lambda_T$, so $\N'$ is the tetrahedral net. Finally, the image of the tetrahedral net under the antipodal map is the dual tetrahedral net, which meets it exactly at the midpoints of the six edges. So $P$ and $-P$ are the midpoints of two opposite edges.
\end{proof}

\subsection{Proof of Proposition \ref{p:classification}}
We argue by induction on the number of vertices of $\N$.

\medskip

{\bf Step 1. Two vertices.} By \cite[Proposition 5.1(iii)]{DLG} $\N$ consists of one, two, or three half-equators joining the pins. Stationarity at the pins gives (a), (b), or (c).

From now on $\N$ has at least three vertices. In particular no equiangular biangle has a pin as a vertex: otherwise both its vertices would be pins.

\medskip

{\bf Step 2. No pin has degree $3$.} Assume $P$ has degree $3$, with neighbors $A_1,A_2,A_3$. The three faces at $P$ have angle $\frac{2\pi}3$ at $P$ and no other pin on their boundary, so they are equiangular. They are not biangles, so they are triangles. These equilateral triangles share the edges $PA_i$, so $P,A_1,A_2,A_3$ are the vertices of a tetrahedral net and the edges $A_iA_j$ belong to $\N$. By connectedness $\N$ is this tetrahedral net. But then $-P$, the center of the face $A_1A_2A_3$, does not belong to $\N$, a contradiction.

Hence both pins have degree $1$ or $2$.

\medskip

{\bf Step 3. Some face has no pin on its boundary.} We show that this case cannot occur. Let $T$ be such a face. All its angles are at vertices which are not pins, so $T$ is equiangular, and by assumption it is a biangle or a triangle.

By Lemma \ref{l:reduction}, the net $\tilde\N$ obtained replacing the sides of $T$ with its medial axis is a stationary net with the same two antipodal pins, with fewer vertices, and whose equiangular faces are biangles or triangles. By the induction hypothesis $\tilde\N$ is one of the nets (a)--(d). This is impossible:
\begin{itemize}
\item if $T$ is a triangle, the edges of $\tilde\N$ at the center $O$ of $T$ contain the arcs joining $O$ to the vertices of $T$, which are longer than half the side of $T$, i.e.\ than $\frac{\lambda_T}2$. But the nets (a)--(c) have no vertex other than the pins, and in (d) every vertex which is not a pin is the endpoint of an edge of length $\frac{\lambda_T}2$;
\item if $T$ is a biangle, $\tilde\N$ has an edge which contains the half-equator bisecting $T$, hence longer than $\pi$, while no net in (a)--(d) has such an edge.
\end{itemize}

\medskip

{\bf Step 4. Every face has a pin on its boundary.} Let $n\geq 1$ be the number of vertices which are not pins, and let $D$ be the sum of the degrees of the two pins.

No edge joins the two pins, no vertex is joined to both pins, and no vertex is joined to the same pin by two edges: in the first two cases some face would have both pins on its boundary, and in the third case the two edges would lie on the same great circle. Hence $D\leq n$. On the other hand, every face contains a sector at one of the pins, so the number $f$ of faces is at most $D$. Since $\N$ is connected, Euler's formula gives $f=2-(n+2)+\frac{3n+D}{2}=\frac{n+D}{2}$. So $n\leq D$, and therefore $n=D\in\{2,3,4\}$, and every vertex which is not a pin is joined to exactly one pin.

Let $\N_0$ be the net obtained removing the pins and the edges ending at them. All its vertices have degree $2$, so $\N_0$ is a union of $c\geq 1$ disjoint closed polygonal curves, and it has $c+1$ faces. Each face of $\N$ touches a pin, so each face of $\N_0$ contains a pin. Hence $c=1$: $\N_0$ is a closed polygon with $n$ vertices, and its two faces $Q$ and $Q'$ contain $P$ and $-P$ respectively. At each vertex of $\N_0$ the angle is $\frac{4\pi}3$ in the face containing the pin to which the vertex is joined, and $\frac{2\pi}3$ in the other face.

\begin{itemize}
\item $n=2$. The two vertices of $\N_0$ are antipodal and $Q$, $Q'$ are biangles. The two angles of a biangle are equal, so one of the two faces, say $Q$, has both angles equal to $\frac{2\pi}3$. Then no vertex is joined to $P$, a contradiction.
\item $n=3$. The pins have degrees $1$ and $2$: up to exchanging them, $P$ has degree $2$, with neighbors $B$ and $C$, and $-P$ has degree $1$, with neighbor $A$. By Lemma \ref{l:balancing} the sum of the unit tangent vectors at $P$ of the edges $PB$ and $PC$ is the unit tangent vector at $-P$ of the edge $(-P)A$. Hence the edges at $P$ form an angle $\frac{2\pi}{3}$ and, as in Step 1 of the proof of Lemma \ref{l:four-cycle}, $B$ and $C$ lie in the interior of the sides of a closed biangle $L$ with vertices $\pm P$ and angle $\frac{2\pi}3$, while $A$ lies inside $L$ on its bisector. By Lemma \ref{l:lune}, $B$ and $C$ are endpoints of the same edge longer than $\pi$, namely $BC$, while the other two edges $AB$ and $AC$ of $\N_0$ are shorter than $\pi$, hence contained in $L$. So the quadrilateral $PBAC$ is a face of $\N$, and all its angles are $\frac{2\pi}3$ (the edge $(-P)A$ lies on the other side of $BAC$). This contradicts the assumption.
\item $n=4$. Both pins have degree $2$, and by Lemma \ref{l:four-cycle} $\N$ is the net (d). \qed
\end{itemize}

\end{document}